\newtheorem{theorem}{Theorem}[section]
\newtheorem{lemma}[theorem]{Lemma}
\newtheorem{proposition}[theorem]{Proposition}
\newtheorem{corollary}[theorem]{Corollary}
\newtheorem{remark}[theorem]{Remark}
\def\og{\leavevmode\raise.3ex\hbox{$\scriptscriptstyle\langle\!\langle$~}}
\def\fg{\leavevmode\raise.3ex\hbox{~$\!\scriptscriptstyle\,\rangle\!\rangle$}}
\newcommand{\N}{\mathbb{Z}_{\geq 0}}
\newcommand{\Z}{\mathbb{Z}}
\newcommand{\R}{\mathbb{R}}
\newcommand{\C}{\mathbb{C}}
\newcommand{\Sp}{\mathbb{S}}
\def\germ #1 {\mathfrak{#1}}
\def\cal #1 {\mathcal{#1}}
\title{Integrability of weight module of degree 1}
\author{Guillaume Tomasini}
\begin{document}

%%----------------------------------------------------------------------------------------------------------------------------

\maketitle

%%----------------------------------------------------------------------------------------------------------------------------

\begin{abstract}
The aim of this article is to find all weight modules of degree $1$ of a simple complex Lie algebra that integrate to a continuous representation of a simply-connected real Lie group on some Hilbert space.
\keywords{Weight modules \and Representations of Lie groups \and Gelfand-Kirillov dimension}
% \PACS{PACS code1 \and PACS code2 \and more}
%\subclass{22E46 \and 22E45 \and 22E47 \and 17B10}
\end{abstract}

%%----------------------------------------------------------------------------------------------------------------------------

\section{Introduction}

Let $\germ g $ denote a simple Lie algebra over the field $\C$ of complex numbers. Let $\germ h $ be a Cartan subalgebra of $\germ g $. A weight module is a $\germ g $-module, $\germ h $-diagonalizable, having finite dimensional weight spaces. The set of all weight $\germ g $-modules is a category containing the BGG category $\cal O $. This category of weight modules has been much studied in recent years (e.g. \cite{Fe90,Fu,BL87,Ma00,BKLM04,BL01,GS06,GS10,MS10,MS10b}). It is then a natural question to find those weight modules that integrate to continuous (resp. unitary) representations of simply-connected real Lie groups. They should form a small but interesting class of representations, with small Gelfand-Kirillov dimension. In this paper, we treat the case of weight modules with weight multiplicities equal to $1$.

Let us explain our strategy. Let $G$ be a simply-connected real Lie group. Assume the $\germ g $-module $V$ integrates to a continuous representation $\pi$ of $G$ in some Hilbert space $\cal H $. Let $K$ be a compact subgroup of $G$ and denote by $(\pi_{|K},\cal H _{|K})$ the representation $(\pi,\cal H )$ restricted to $K$. Then it is well known that the representation $(\pi_{|K},\cal H _{|K})$ is unitarizable. Therefore, we can express $\cal H _{|K}$ as a direct sum of simple finite dimensional unitary representation of $K$. A consequence is that the (complexified) Lie algebra of $K$ should act nicely on $V$: the module $V$ should be a $\germ k $-finite $\germ g $-module. If $K$ is big enough, this condition is strong enough to imply that $V$ should be a highest (or lowest) weight module. We then use a classification result due to Benkart, Britten and Lemire to describe the possible modules. Then it remains to check whether or not these modules can be integrated. Our results also make use of a theorem of J{\o}rgensen and Moore and classical results about discrete series.

%%----------------------------------------------------------------------------------------------------------------------------

\section{Some facts about weight modules}

\subsection{Weight modules of degree $1$}

Let $\germ g $ denote a simple Lie algebra over the field $\C$ of complex numbers. Fix a Cartan subalgebra $\germ h $. A $\germ g $-module $V$ is called a \emph{weight module} if 
\begin{enumerate}
\item The module $V$ is finitely generated,
\item We have the following decomposition of $V$: $$V=\bigoplus_{\lambda \in \germ h ^*}\ V_{\lambda},\quad V_{\lambda}:=\{m\in V\ | \ \forall \ H \in \germ h , \ H\cdot m =\lambda(H)m\},$$
\item The weight spaces $V_{\lambda}$ are all finite dimensional.
\end{enumerate}
We call \emph{degree} of a weight module the supremum of the dimension of the weight spaces:
$$deg(V)=\sup_{\lambda}\: \{dim(V_{\lambda})\} \in \N\cup \infty.$$ When $deg(V)\in \N$, we call $V$ a \emph{bounded module}. In particular, if $V$ is a weight $\germ g $-module of degree 1, then all the non zero weight spaces are $1$-dimensional. The weight modules of  degree 1 have been studied by Benkart, Britten and Lemire \cite{BBL97}. In particular, they constructed weight modules of degree 1, $N(a)$ (with $a\in \C^n$) for $\germ sl (n+1,\C)$ and $M(b)$ (with $b\in \C^n$) for $\germ sp (n,\C)$ (see \cite{BBL97} for the explicit construction of these modules). Moreover, they proved the following:
\begin{theorem}[Benkart, Britten, Lemire {\cite{BBL97}}]\label{thm-BBL}
Let $V$ be a simple infinite dimensional weight $\germ g $-module of degree 1. Then
\begin{enumerate}
\item The Lie algebra $\germ g $ is isomorphic to either $\germ sl (n+1,\C)$ or to $\germ sp (n,\C)$.
\item The Gelfand-Kirillov dimension of $V$ equals the rank of $\germ g $.
\item If $\germ g =\germ sl (n+1,\C)$, then there is $a\in \C^n$ such that $V\cong N(a)$.
\item If $\germ g =\germ sp (n,\C)$, then there is $b\in \C^n$ such that $V\cong M(b)$.
\end{enumerate}
\end{theorem}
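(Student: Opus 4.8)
The plan is to follow the route of Benkart, Britten and Lemire \cite{BBL97}, whose two essential inputs are Fernando's results on the structure of simple weight modules (in particular the non-existence of torsion-free modules outside types $A$ and $C$) and the explicit realization of degree-$1$ modules inside a Weyl algebra.

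\emph{Step 1 (identifying $\mathfrak{g}$).} Let $V$ be simple, infinite dimensional, of degree $1$. To prove (1), I would first use the fact that the weight multiplicities of a simple bounded module are constant on a Zariski-dense subset of its support, so that $V$ occurs as a simple subquotient of a semisimple coherent family $\mathcal{M}$ (Mathieu \cite{Ma00}); since $\deg V = 1$ this coherent family has degree $1$, and its generic members are simple torsion-free $\mathfrak{g}$-modules. Thus $\mathfrak{g}$ admits a torsion-free module, and Fernando's theorem \cite{Fe90} forces $\mathfrak{g} \cong \mathfrak{sl}(n+1,\C)$ or $\mathfrak{g} \cong \mathfrak{sp}(n,\C)$. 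This settles (1) and reduces (3), (4) to understanding the degree-$1$ coherent families of these two algebras together with their simple subquotients.

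\emph{Step 2 (classifying the modules).} For $\mathfrak{g} = \mathfrak{sl}(n+1,\C)$, realize $\mathfrak{g}$ inside the $n$-th Weyl algebra $A_n$ (generated by $x_1,\dots,x_n,\partial_1,\dots,\partial_n$) through the action of $\mathfrak{sl}(n+1,\C)$ as vector fields on the affine chart $\C^n \subset \mathbb{P}^n$; then each twisted localization $x_1^{a_1}\cdots x_n^{a_n}\,\C[x_1^{\pm 1},\dots,x_n^{\pm 1}]$, $a \in \C^n$, is a weight module of degree $1$, and these, together with their simple subquotients, are the modules $N(a)$. I would then show that every simple degree-$1$ weight $\mathfrak{sl}(n+1,\C)$-module is isomorphic to some $N(a)$: the attached degree-$1$ coherent family acts on its one-dimensional weight spaces rigidly enough that all the structure constants of the $\mathfrak{sl}(2)$-triples are determined by the central character up to a single continuous parameter, which is then matched against $a$. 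For $\mathfrak{g} = \mathfrak{sp}(n,\C)$ the same scheme applies with the oscillator (metaplectic) embedding $\mathfrak{sp}(n,\C) \hookrightarrow A_n$, whose image is spanned by the symmetrized degree-two monomials in the $x_i,\partial_i$; the even and odd parts of $\C[x_1,\dots,x_n]$, their localizations and the twists $x_1^{b_1}\cdots x_n^{b_n}\,\C[x_1^{\pm 1},\dots,x_n^{\pm 1}]$ furnish the modules $M(b)$, $b \in \C^n$, and the identical rigidity argument shows that these exhaust the degree-$1$ weight modules. This gives (3) and (4).

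\emph{Step 3 (Gelfand--Kirillov dimension) and the obstacle.} Part (2) is then a lattice-point count: the support of $N(a)$ is a translate of the support of $\C[x_1^{\pm 1},\dots,x_n^{\pm 1}]$ (or of a full-dimensional sub-cone thereof), which spans $\mathfrak{h}^*$; since every weight space has dimension at most $1$, the span of the weight spaces with weights in a ball of radius $r$ has dimension of order $r^{n}$, whence $\mathrm{GKdim}\,N(a) = n = \mathrm{rank}\,\mathfrak{sl}(n+1,\C)$, and the same computation gives $\mathrm{GKdim}\,M(b) = n = \mathrm{rank}\,\mathfrak{sp}(n,\C)$. The main obstacle is Step 2 --- proving that the explicitly constructed families $N(a)$ and $M(b)$ really capture every degree-$1$ module, i.e. the rigidity statement that the one-dimensional weight spaces leave no freedom beyond the parameters $a$, $b$ --- together with the two deep inputs it relies on through Step 1: Mathieu's classification of coherent families and Fernando's restriction of torsion-free modules to types $A$ and $C$.
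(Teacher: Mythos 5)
The paper does not prove this theorem: it is quoted verbatim from Benkart--Britten--Lemire \cite{BBL97} and used as an external input, so there is no internal argument to measure your proposal against. Judged on its own terms, your sketch is a sound reconstruction of how the result is established in the literature, but it splices two historically distinct routes: Step 1 is Mathieu's coherent-family argument from \cite{Ma00} (which postdates \cite{BBL97}; the original proof instead reduces to torsion-free modules via Fernando's parabolic-induction theorem \cite{Fe90} and the Britten--Lemire classification of pointed modules \cite{BL87}), while Step 2 is the Weyl-algebra realization actually used in \cite{BBL97}. The hybrid is mathematically legitimate: an infinite-dimensional simple bounded module embeds in an irreducible semisimple coherent family whose generic members are cuspidal, and Fernando's theorem then forces type $A$ or $C$, which gives (1). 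Your Step 3 is also fine --- the upper bound on the Gelfand--Kirillov dimension comes from degree $1$ together with the fact that $U_r(\mathfrak{g})v$ only reaches weights within distance $O(r)$ in the rank-$n$ root lattice, and the lower bound from the support of $N(a)$ or $M(b)$ containing a full-dimensional cone.

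The one genuine soft spot is the one you flag yourself: the ``rigidity'' assertion in Step 2, namely that the structure constants on a chain of one-dimensional weight spaces are pinned down by the central character up to the $n$ parameters $a$ (resp.\ $b$), so that $N(a)$ and $M(b)$ exhaust all simple degree-$1$ modules. As written this is a statement of the desired conclusion rather than an argument; it is precisely the content of \cite[Section 4]{BBL97} and of \cite{BL87}, and it requires an explicit computation with the $\mathfrak{sl}(2)$-commutation relations acting on lines (equivalently, the identification of the relevant quotient of $U(\mathfrak{g})$ with a subalgebra of the Weyl algebra). Since the statement is presented in the paper as a citation, leaving this step to the reference is acceptable, but be aware that it, not Steps 1 or 3, is where all the work lies.
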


For our purpose we shall need another notion. Let $\germ l $ be a subalgebra of $\germ g $. A $\germ g $-module $V$ is a \emph{$(\germ g ,\germ l )$-module of finite type} if as an $\germ l $-module, $V$ splits into a direct sum of simple finite dimensional $\germ l $-modules, with finite multiplicities. For instance, a weight module is a $(\germ g ,\germ h )$-module of finite type. The general notion of $(\germ g ,\germ k )$-module has been studied in details by Penkov, Serganova and Zuckerman in \cite{PS02,PSZ04,PZ04a,PZ04b,PZ07}.

\subsection{Classification of $(\germ g ,\germ l _{j})$-module of finite type and of degree $1$}

Let $n$ be a positive integer greater than $1$. Let $\germ g $ denote the Lie algebra $\germ sl (n+1,\C)$. Let $\germ h $ denote the standard Cartan subalgebra of $\germ g $, consisting of diagonal matrices. Denote by $H_0, H_1, \ldots, H_{n-1}$ its canonical basis. Denote by $E_j$ (resp. $F_j$) the vector in $\germ g $ corresponding to the elementary matrix $E_{j+1,j+2}$ (resp. $E_{j+2,j+1}$) for $0\leq j\leq n-1$. Then $\germ g $ is Lie-generated by the vectors $\{H_j,E_j,F_j\}_{0\leq j\leq n-1}$. Denote by $\germ l _j$ the maximal standard Levi subalgebra of $\germ g $ Lie-generated by $\germ h $ and the vectors $\{E_k,F_k\}_{k\not=j}$.

For future use, we shall find those infinite dimensional weight $\germ g $-modules of degree 1 whose restriction to some $\germ l _j$ is a direct sum of finite dimensional $\germ l _j$-modules. To this aim, we need the following general result:
\begin{lemma}[Fernando {\cite{Fe90}}, Benkart-Britten-Lemire {\cite{BBL97}}]\label{Fe-BBL}
Let $\germ a $ be a simple Lie algebra over $\C$. Let $\germ t $ be a Cartan subalgebra of $\germ a $. Let $V$ be a simple weight $\germ a $-module. Let $\cal R $ denote the root system of $(\germ a ,\germ t )$. Then
\begin{enumerate}
\item For any $\alpha\in \cal R $, and any $X\in\germ a _{\alpha}\setminus\{0\}$, the action of $X$ on $V$ is either locally finite or injective.
\item Let $\alpha, \beta \in \cal R $ be such that there are $X^{\pm}\in\germ a _{\pm\alpha}\setminus\{0\}$ and $Y^{\pm}\in \germ a _{\pm\beta}\setminus\{0\}$ satisfying $X^{\pm}$ both act locally finitely on $V$ and $Y^{\pm}$ both act injectively on $V$. Then $\alpha+\beta\not\in \cal R $.
\end{enumerate}
\end{lemma}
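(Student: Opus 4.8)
Let me sketch a plan for proving this lemma about simple weight modules.

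For part (1): The key point is that $\germ a _\alpha$ is one-dimensional, spanned by a root vector $X$, and $[X, \germ a _\alpha] = 0$. I would look at the action of $X$ on $V$. Since $V = \bigoplus_\lambda V_\lambda$ with each $V_\lambda$ finite-dimensional, $X$ maps $V_\lambda$ into $V_{\lambda+\alpha}$. Consider the subspace $V^{X\text{-lf}}$ of vectors on which $X$ acts locally finitely (i.e., killed by some power of $X$); equivalently $\bigcup_k \ker X^k$. The plan is to show $V^{X\text{-lf}}$ is a $\germ g$-submodule. Since $V$ is simple, it is then either $0$ or all of $V$; in the first case $X$ is injective, in the second $X$ acts locally finitely. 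To show $V^{X\text{-lf}}$ is a submodule, the main tool is the following: for any $Y \in \germ a$ and any $v$ with $X^k v = 0$, one bounds $X^N(Yv)$ using the identity $X^N Y = \sum_{i=0}^N \binom{N}{i} (\mathrm{ad}\,X)^i(Y) X^{N-i}$; since $\mathrm{ad}\,X$ is nilpotent on $\germ a$ (as $\germ a$ is finite-dimensional and $X$ is a root vector, hence ad-nilpotent), only finitely many terms survive, and each is killed once $N - i \geq k$. Hence $X$ acts locally finitely on $Yv$, so $V^{X\text{-lf}}$ is stable under all of $\germ a$.

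For part (2): Given the hypotheses, $X^\pm$ act locally finitely on $V$ and $Y^\pm$ act injectively. Suppose toward a contradiction that $\gamma := \alpha + \beta \in \cal R$. Pick a nonzero root vector $Z \in \germ a _\gamma$. I want to derive a contradiction by showing $Z$ must act both locally finitely and injectively, which is impossible on a nonzero module (locally finite $+$ injective on a graded module with finite-dimensional pieces forces $V = 0$, since local finiteness gives a nonzero kernel in some weight space). Up to scalar, $Z$ is a multiple of $[X, Y]$ where $X \in \germ a _\alpha$, $Y \in \germ a _\beta$ — but I need to be careful about which sign combination lands in $\germ a _\gamma$, and $[X^+, Y^+]$ may vanish if $\alpha+\beta$ is not a root in the ``positive'' sense; the relevant bracket is $[\germ a _\alpha, \germ a _\beta] = \germ a _{\alpha+\beta}$ when $\alpha+\beta$ is a root. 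So write $Z \propto [X^+, Y^+]$ (choosing signs so this is in $\germ a _\gamma$ and nonzero). Then for $v$ with $(X^+)^k v = 0$, expand $Z^N v$ using the commutator expansion to show $Z$ acts locally finitely — this uses that $\mathrm{ad}\,X^+$ is nilpotent on $\germ a$, so $(\mathrm{ad}\,X^+)^i Y^+$ vanishes for large $i$, and the surviving terms involve powers of $X^+$ that eventually kill $v$; combined with an induction on weight spaces using part (1) (that $X^+$ is locally finite means every vector is killed by a power of $X^+$) this gives local finiteness of $Z$. On the other hand, I'd show $Z$ is injective by using $[Z, Y^-] = \pm (\text{something in } \germ a _\alpha) + \ldots$ to relate $\ker Z$ to the injectivity of $Y^-$ — or more cleanly, use the $\germ{sl}(2)$ or Heisenberg-type relations among $X^\pm, Y^\pm$ and $Z$ to show a vector in $\ker Z$ would produce a vector in $\ker Y^+$ or contradict injectivity of $Y^-$.

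The main obstacle I anticipate is part (2): carefully tracking the root-vector brackets and sign conventions to ensure $[\germ a _\alpha, \germ a _\beta] = \germ a _{\alpha+\beta} \neq 0$, and then making the ``locally finite and injective simultaneously'' contradiction rigorous. The cleanest route is probably: first establish as a general principle that on a simple weight module no nonzero root vector can be \emph{both} locally finite and injective (local finiteness forces a nonzero kernel, contradicting injectivity, unless $V=0$) — this is immediate. Then the entire weight of part (2) is in showing that the hypotheses force $Z \in \germ a _{\alpha+\beta}$ to be locally finite (via the ad-nilpotency expansion applied to $X^+$) \emph{and} injective (via the injectivity of $Y^\pm$ propagating through the bracket relation $Z = [X^+, Y^+]$ up to scalar, using that $X^-$ applied to a kernel vector and then commuting gives back injectivity constraints). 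I'd be prepared for the injectivity half to require a short case analysis depending on whether $\alpha - \beta$ or $2\alpha+\beta$ etc. are roots, i.e. on the relative position of $\alpha$ and $\beta$ in the root string.
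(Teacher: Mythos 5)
The paper does not actually prove this lemma; its ``proof'' is a citation to \cite[Section 4]{BBL97} (and to Fernando \cite{Fe90}), so there is no in-paper argument to compare against. Judged on its own terms, your plan for part (1) is the standard and correct Fernando argument: the locally finite vectors for $X$ form a $\mathfrak{g}$-submodule because $X^N Y=\sum_i\binom{N}{i}(\mathrm{ad}\,X)^i(Y)X^{N-i}$ and $\mathrm{ad}\,X$ is nilpotent, and simplicity then forces the dichotomy (together with the observation that for $\alpha\neq 0$ local finiteness on a weight module is the same as local nilpotence, so ``locally finite'' and ``injective'' are mutually exclusive on $V\neq 0$). That part is fine.

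Part (2) has a genuine gap, and it is exactly where the real content of the lemma lies. Your strategy is to show that $Z=[X^+,Y^+]\in\mathfrak{a}_{\alpha+\beta}$ is simultaneously locally finite and injective. The ``locally finite'' half is obtained by the same $\mathrm{ad}$-nilpotency expansion as in part (1), but that expansion proves $[X^+,Y^+]$ is locally finite only when \emph{both} $X^+$ and $Y^+$ are: after reordering, $Z^Nv$ contains monomials with high powers of $Y^+$ and low powers of $X^+$, and nothing kills those when $Y^+$ is injective. This is not a repairable bookkeeping issue --- the conclusion itself is false. In the paper's own module $N(a,0,\ldots,0)$ for $\mathfrak{sl}(3,\C)$ with $a\notin\N$, the operator $F_1$ acts locally finitely and $F_0$ acts injectively, yet $[F_1,F_0]x(k)=(a-|k|)x(k+\epsilon_2)$ acts injectively. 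So in general $Z$ need not be locally finite, and the intended ``both locally finite and injective'' contradiction never materializes. What part (1) gives you is only a dichotomy, and the proof must handle the case where $Z$ (and also $[X^-,Y^-]\in\mathfrak{a}_{-\alpha-\beta}$) is injective; in that case one can no longer argue inside the closed set of locally finite roots (where your sketch, read as ``if $Z$ were locally finite then $Y^+=c[X^-,Z]$ would be too'', does work). The hard case genuinely uses the hypothesis that \emph{both} $X^+$ and $X^-$ are locally finite: $V$ is then an integrable $\mathfrak{sl}(2)_\alpha$-module, and one exploits the resulting $s_\alpha$-symmetry of the support and the $\alpha$-root string through $\beta$ to reach a contradiction. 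That idea is absent from your plan, and the appeal to ``$\mathfrak{sl}(2)$ or Heisenberg-type relations'' for the injectivity half is not an argument. As written, part (2) is not proved.
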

\begin{proof} See \cite[Section4]{BBL97}.
\end{proof}

\begin{corollary}\label{cor-gKd1}
Let $V$ be a simple weight $\germ g $-module. Let $0\leq j\leq n-1$. Assume that $V$ is a $(\germ g ,\germ l _j)$-module of finite type. Then $V$ is a highest weight or a lowest weight module.
\end{corollary}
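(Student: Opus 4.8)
The plan is to use Lemma~\ref{Fe-BBL} to show that every root vector of $\germ g $ acts either locally finitely or injectively on $V$, and that the set of roots whose root vectors act locally finitely is ``closed'' in a way that forces it to be a parabolic subset of $\cal R $; the highest/lowest weight conclusion then follows from the standard fact that a simple weight module on which the root vectors of a parabolic subalgebra act locally finitely is a highest (or lowest) weight module.

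\medskip

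First I would record that since $V$ is a $(\germ g ,\germ l _j)$-module of finite type, every vector in $\germ l _j$ acts locally finitely on $V$: indeed $V$ decomposes as a direct sum of finite dimensional $\germ l _j$-modules, on each of which every element of $\germ l _j$ acts as a finite dimensional operator, so every element of $\germ l _j$ is locally finite on $V$. In particular, for each $k\neq j$ the root vectors $E_k$ and $F_k$ (attached to the simple roots $\alpha_k$ and $-\alpha_k$) act locally finitely on $V$. Let $\cal R _L$ be the set of roots of $\germ l _j$, i.e. the subsystem generated by $\{\alpha_k : k\neq j\}$; by part~(1) of Lemma~\ref{Fe-BBL}, for every $\alpha\in\cal R _L$ and every nonzero $X\in\germ g _\alpha$, the action of $X$ on $V$ is locally finite (this also follows directly from $\germ g _\alpha\subset\germ l _j$).

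\medskip

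Next I would analyze the simple root $\alpha_j$. By part~(1) of Lemma~\ref{Fe-BBL}, each of $E_j$ and $F_j$ acts on $V$ either locally finitely or injectively. If both act locally finitely, then together with the $E_k,F_k$ for $k\neq j$ we get that all simple root vectors (positive and negative) act locally finitely, so all of $\germ g $ acts locally finitely on $V$; since $V$ is a simple weight module this forces $V$ finite dimensional, a case that is vacuous or trivially of highest weight type. So at least one of $E_j$, $F_j$ acts injectively; say $E_j$ acts injectively (the other case is symmetric and yields a lowest weight module). I then claim $F_j$ acts locally finitely. Suppose not, so $F_j$ also acts injectively. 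Now I want to derive a contradiction from part~(2) of Lemma~\ref{Fe-BBL}: take $\beta=\alpha_j$, with $X^{+}=E_j$ and $X^{-}=F_j$ both injective, and take some $\alpha\in\cal R _L$ with $\alpha+\alpha_j\in\cal R $ and $\alpha_j-\alpha\in\cal R $ — such $\alpha$ exists because $\alpha_j$ is a non-isolated node of the Dynkin diagram of $\germ g $ (true for all the relevant $\germ g $), so there is an adjacent simple root $\alpha_{k_0}$, $k_0\neq j$, with $\alpha_{k_0}+\alpha_j$ a root. The root vectors $E_{k_0},F_{k_0}\in\germ l _j$ act locally finitely, so with $\alpha=\alpha_{k_0}$ we have locally finite root vectors in degrees $\pm\alpha$ and injective root vectors in degrees $\pm\beta$, yet $\alpha+\beta=\alpha_{k_0}+\alpha_j\in\cal R $, contradicting part~(2). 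Hence $F_j$ acts locally finitely.

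\medskip

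Finally, I would assemble the conclusion. We now know all negative simple root vectors $F_0,\dots,F_{n-1}$ act locally finitely on $V$ (the $F_k$ with $k\neq j$ because they lie in $\germ l _j$, and $F_j$ by the previous step). The set of roots $\gamma$ such that $\germ g _\gamma$ acts locally finitely on $V$ contains all negative simple roots and is closed under the bracket action whenever brackets stay roots (this again follows from part~(2) of Lemma~\ref{Fe-BBL}, used contrapositively: if $X\in\germ g _\alpha$ and $Y\in\germ g _\beta$ both acted locally finitely but $X'\in\germ g _{-\alpha}$, $Y'\in\germ g _{-\beta}$ acted injectively while $\alpha+\beta$ is a root, we would get a contradiction), so in fact $\germ g _\gamma$ acts locally finitely for every negative root $\gamma$, and also for every root in $\cal R _L$. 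Therefore the subalgebra $\germ p ^-$ generated by $\germ h $, all $F_k$, and all $E_k$ with $k\neq j$ — which is the parabolic subalgebra $\germ l _j \oplus (\text{nilradical})$ with opposite choice, i.e. contains a Borel — acts locally finitely on $V$. A standard argument (pick a weight vector generating a finite dimensional $\germ p ^-$-submodule, which then contains a lowest weight vector for the nilradical; simplicity of $V$ does the rest) shows $V$ is a lowest weight module; symmetrically, if instead $F_j$ had been the injective one, $V$ is a highest weight module. This completes the proof. The main obstacle is the middle step: verifying that $F_j$ must act locally finitely once $E_j$ is injective, which is exactly where the non-isolatedness of the node $\alpha_j$ in the Dynkin diagram and the precise form of Lemma~\ref{Fe-BBL}(2) are essential; one must be slightly careful that the adjacent simple root $\alpha_{k_0}$ indeed lies in $\cal R _L$ (it does, since only $\alpha_j$ is removed) and that $\alpha_{k_0}+\alpha_j$ is genuinely a root of $\germ g $ (true by adjacency in the diagram).
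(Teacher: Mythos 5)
Your proposal is correct and takes essentially the same route as the paper: you use the finite-type hypothesis to get local finiteness of $E_k,F_k$ for $k\neq j$, then apply Lemma~\ref{Fe-BBL}(2) to $\alpha_j$ together with an adjacent simple root $\alpha_{j\pm 1}$ to rule out both $E_j$ and $F_j$ acting injectively, and conclude via the standard fact that local finiteness of all positive (resp.\ negative) simple root vectors forces a highest (resp.\ lowest) weight module. The extra detail you supply (the finite-dimensional case, the closure of the locally finite root set, the parabolic argument) is just an expansion of what the paper leaves implicit.
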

\begin{proof} From lemma \ref{Fe-BBL}, the vectors $E_k$ and $F_k$ act locally finitely or injectively on $V$. By hypothesis, they act locally finitely for $k\not=j$. Now, lemma \ref{Fe-BBL} applied to $E_j$, $F_j$ and either $E_{j-1}$, $F_{j-1}$ or $E_{j+1}$, $F_{j+1}$ shows that at least one of the vectors $E_j$ and $F_j$ acts locally finitely on $V$. In the first case, the module is a highest weight module. In the second case, it is a lowest weight module.
\end{proof}

Denote by $\{\omega_{i}\}_{i=1..n}$ the fundamental weights for $\germ g $. Recall now the following:

\begin{proposition}[Benkart-Britten-Lemire {\cite[Proposition 3.4]{BBL97}}]\label{prop-HWd1}
Up to isomorphism, the only highest weight $\germ g $-module of degree 1 are the modules with highest weight $a\omega_{1}$, $a\omega_{i}-(1+a)\omega_{i+1}$, and $a\omega_{n}$, for $a\in \C$.
\end{proposition}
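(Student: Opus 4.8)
The statement pins down which highest weights $\lambda$ can occur: a highest weight module of degree $1$ has a degree-$1$ simple quotient, so, writing $L(\lambda)$ for the simple $\germ g $-module of highest weight $\lambda$, the content is that $\deg L(\lambda)=1$ holds exactly for $\lambda$ in the three listed families. Since $L(\lambda)_{\lambda}=\C v_{\lambda}$ is one-dimensional, $\deg L(\lambda)=1$ just means that \emph{every} weight space of $L(\lambda)$ is at most one-dimensional. I would split the proof into a \emph{necessity} part (cut $\lambda$ down to the listed weights by restricting to small subalgebras) and a \emph{sufficiency} part (realise each listed module and compute its weight multiplicities).

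For necessity, fix $1\le i\le n-1$ and let $\germ a _{i}\cong\germ sl (3,\C)$ be the subalgebra generated by $H_{i-1},H_{i},E_{i-1},E_{i},F_{i-1},F_{i}$. The $\germ a _{i}$-submodule $U(\germ a _{i})v_{\lambda}\subseteq L(\lambda)$ is a highest weight $\germ sl (3,\C)$-module all of whose weight spaces are at most one-dimensional, hence so is its unique simple quotient, which is the simple $\germ sl (3,\C)$-module determined by the pair $\bigl(\lambda(H_{i-1}),\lambda(H_{i})\bigr)$. Thus one first settles the case $\germ g =\germ sl (3,\C)$ by hand: there every Verma module has a weight space of dimension $2$ (the Kostant partition function of $\alpha_{1}+\alpha_{2}$ equals $2$), so $L(\lambda)$ is always a proper quotient of $M(\lambda)$, and one checks directly which quotients stay multiplicity-free, obtaining that a degree-$1$ highest weight $\germ sl (3,\C)$-module has highest weight with coordinates $(c_{1},c_{2})$ satisfying $c_{1}=0$, or $c_{2}=0$, or $c_{1}+c_{2}+1=0$. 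Applying this to every $\germ a _{i}$ gives, in the coordinates $c_{j}:=\lambda(H_{j})$: for every $j\le n-2$, $c_{j}=0$ or $c_{j+1}=0$ or $c_{j}+c_{j+1}+1=0$. These conditions still allow a few spurious solutions — two nonzero coordinates which are not adjacent, or three consecutive nonzero coordinates with $c_{j}+c_{j+1}=-1=c_{j+1}+c_{j+2}$ — and to eliminate them I would similarly restrict to the $\germ sl (4,\C)$-subalgebras generated by three consecutive triples $E_{j},E_{j+1},E_{j+2},F_{j},F_{j+1},F_{j+2}$ and invoke the case $\germ g =\germ sl (4,\C)$, itself handled by a direct weight-multiplicity computation for the few borderline highest weights. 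What survives is exactly the three families in the statement.

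For sufficiency, the two end families are easy: $L(a\omega_{1})$ (resp.\ $L(a\omega_{n})$) is a quotient of the generalized Verma module induced from the one-dimensional $\germ p _{1}$-module $\C_{a\omega_{1}}$ (resp.\ $\germ p _{n}$-module $\C_{a\omega_{n}}$), where $\germ p _{1},\germ p _{n}$ are the maximal parabolics omitting the first, resp.\ last, simple root. In type $A$ the nilradical of a maximal parabolic is abelian; here it has dimension equal to the rank $n$, and the $n$ weights occurring in it form a $\Z$-basis of the root lattice. Hence this generalized Verma module is already multiplicity-free, and any quotient of it — in particular $L(a\omega_{1})$ — has degree $\le 1$, for every $a\in\C$. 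For the middle family $\lambda=a\omega_{i}-(1+a)\omega_{i+1}$ one computes that $\langle\lambda+\rho,\alpha^{\vee}\rangle$ is a positive integer for $\alpha=\alpha_{i}+\alpha_{i+1}$ and for every simple root $\alpha_{k}$ with $k\neq i,i+1$, \emph{for all} values of $a$, so $M(\lambda)$ always contains the corresponding singular vectors; the plan is then to show that the submodule they generate already cuts $M(\lambda)$ down to a multiplicity-free module, forcing $\deg L(\lambda)=1$ for all $a$. Alternatively, and perhaps more cleanly, one identifies $L(\lambda)$ with the (generically simple) degree-$1$ module $N(a)$ of Theorem~\ref{thm-BBL}, or with an explicit Fock-type realisation, in which multiplicity $1$ is immediate.

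The main obstacle is exactly the sufficiency for the middle family: unlike the end families, $L(\lambda)$ here is not a generalized Verma module for a parabolic with abelian nilradical, so multiplicity-freeness does not come for free from the parabolic structure, and one must control the submodule of $M(\lambda)$ generated by those singular vectors — concretely, verify that the Verma submodules $M(\lambda-\alpha_{i}-\alpha_{i+1})$ and the $M(\lambda-\alpha_{k})$ meet, inside $M(\lambda)$, only in sufficiently low weights, which is a Shapovalov-form computation — or else push through the identification with $N(a)$. A secondary, more bookkeeping-level difficulty is the $\germ g =\germ sl (4,\C)$ base case together with the combinatorial step of checking that the rank-$2$ and rank-$3$ conditions jointly leave precisely the three families.
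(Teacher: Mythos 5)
The paper does not actually prove this proposition: it is imported verbatim from Benkart--Britten--Lemire \cite[Proposition 3.4]{BBL97}, so your attempt has to be judged against the statement itself rather than against an argument in the text. Your sufficiency discussion is essentially sound: the two end families are multiplicity-free because they are quotients of scalar generalized Verma modules for maximal parabolics, whose opposite nilradicals in type $A$ are abelian with linearly independent weights; and for the middle family the honest route is exactly the fallback you name, namely the explicit realisation as $N(-1,\ldots,-1,-1-a,0,\ldots,0)$ --- the paper records this identification right after the proposition, and formulae of the type \eqref{eq-Na} make the multiplicity-one property visible by inspection, so this is not circular provided one takes the construction of the modules $N(a)$ (rather than their classification) as the input.

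The genuine gap is in your necessity argument. Restricting to the $\germ sl (3,\C)$- and $\germ sl (4,\C)$-subalgebras generated by \emph{consecutive} simple root vectors does not pin down the highest weight. Concretely, for $\germ sl (5,\C)$ take $\lambda=\omega_{1}+\omega_{4}$, i.e. coordinates $(c_{1},c_{2},c_{3},c_{4})=(1,0,0,1)$: every adjacent pair satisfies one of your rank-$2$ conditions, and each of the two consecutive triples $(1,0,0)$ and $(0,0,1)$ is of the allowed form $a\omega_{1}$ or $a\omega_{3}$, so your rank-$3$ test passes as well; yet $L(\omega_{1}+\omega_{4})$ is the adjoint representation, whose zero weight space is $4$-dimensional, and $\omega_{1}+\omega_{4}$ belongs to none of the three families. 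The same phenomenon occurs whenever two nonzero coordinates are separated by at least two zeros, so no fixed bound on the size of the consecutive subalgebras will close the argument. The repair is to run your $\germ sl (3,\C)$ restriction for \emph{every} pair of positive roots $\beta,\gamma$ with $\beta+\gamma$ a root: the corresponding root vectors still generate a copy of $\germ sl (3,\C)$, and $\cal U (\germ sl (3,\C))v_{\lambda}$ is still of degree $1$ because $\beta$ and $\gamma$ restrict to linearly independent weights of the small Cartan. In type $A_{n}$ this yields, for all $i\leq j<k$, the condition that $c_{i}+\cdots+c_{j}=0$ or $c_{j+1}+\cdots+c_{k}=0$ or $c_{i}+\cdots+c_{k}=-1$, and this full system of conditions on partial sums does single out exactly the three families (it eliminates $(1,0,0,1)$ via $i=j=1$, $k=4$). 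A smaller issue: your enumeration of the ``spurious solutions'' surviving the rank-$2$ test omits, for instance, a single nonzero interior coordinate different from $-1$, although that particular case would be caught by your rank-$3$ step.
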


In the notation of theorem \ref{thm-BBL}, these modules correspond to $N(a,0,\ldots,0)$, $N(\underbrace{-1,\ldots,-1}_{i},-1-a,0,\ldots ,0)$, and $N(-1,\ldots,-1,-1-a)$, for $a\in \C$.

In the sequel we will often work with the $\germ sl (n+1,\C)$-module $N(a,0,\ldots,0)$. For the convenience of the reader we write down here the action of the vectors $\{H_j,E_j,F_j\}_{0\leq j\leq n-1}$ on a basis. The module $N(a,0,\ldots,0)$ has a basis $x(k)$ indexed by $k\in \N^n$. If $k=(k_1,\ldots,k_n)\in \N^n$, we set $|k|:=k_1+\cdots +k_n$. The action is given by:
\begin{subequations}
\label{eq-Na}
\begin{align}
H_0\cdot x(k) & = (a-k_1-|k|)x(k)\\
H_j\cdot x(k) & = (k_j-k_{j+1})x(k)\\
E_0\cdot x(k) & = k_1x(k-\epsilon_1)\\
F_0\cdot x(k) & = (a-|k|)x(k+\epsilon_1)\\
E_j\cdot x(k) & = k_{j+1}x(k-\epsilon_{j+1}+\epsilon_j)\\
F_j\cdot x(k) & = k_jx(k+\epsilon_{j+1}-\epsilon_j)
\end{align}
\end{subequations}

From this classification, we are in position to prove the

\begin{proposition}\label{prop-glmod}
Let $0\leq j\leq n-1$. Let $V$ be a simple infinite dimensional $(\germ g ,\germ l _j)$-module of finite type and of degree 1. Then 
\begin{enumerate}
\item If $j=0$, then $V$ or its contragredient is isomorphic to $N(a,0,\ldots,0)$, for some $a\in \C\setminus \N$ or to $N(-1,m,0,\ldots,0)$ for some $m\in \N$.
\item If $j=n-1$, then $V$ or its contragredient is isomorphic to $N(-1,\ldots,-1,a)$, for some $a\in \C\setminus\Z_{<0}$ or to $N(-1,\ldots,-1,-1-m,0)$ for some $m\in \N$.
\item If $0<j<n-1$, then $V$ or its contragredient is isomorphic to\newline $N(\underbrace{-1,\ldots,-1}_{j+1},m,0,\ldots,0)$ or $N(\underbrace{-1,\ldots,-1}_{j},-1-m,0,\ldots,0)$, for some $m\in \N$.
\end{enumerate}
\end{proposition}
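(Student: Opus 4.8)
The plan is to feed Corollary \ref{cor-gKd1} into Proposition \ref{prop-HWd1} and then read off the constraint imposed by the $\germ l _j$-action. By Corollary \ref{cor-gKd1}, $V$ or its contragredient is a highest weight module, and replacing $V$ by its contragredient preserves simplicity, infinite-dimensionality, degree $1$ and the property of being a $(\germ g ,\germ l _j)$-module of finite type (the contragredient of a direct sum of finite dimensional $\germ l _j$-modules with finite multiplicities is again of that form); so I may assume $V=L(\lambda)$ is a highest weight module. Proposition \ref{prop-HWd1} then forces $\lambda$ to be one of $a\omega_1$, $a\omega_i-(1+a)\omega_{i+1}$ with $1\le i\le n-1$, or $a\omega_n$, for some $a\in\C$.

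Next I would record what $\germ l _j$-finiteness says about $\lambda$. In the standard numbering $E_k$ is a root vector for $\alpha_{k+1}$, so omitting node $j$ leaves $\germ l _j$ with simple roots $\{\alpha_l\ :\ l\neq j+1\}$. As an $\germ l _j$-module $V$ is a sum of finite dimensional modules, so the highest weight vector generates a finite dimensional highest weight $\germ l _j$-module, whence $\langle\lambda,\alpha_l^\vee\rangle\in\N$ for every $l\neq j+1$; conversely these conditions together with the finiteness of the weight multiplicities give back $\germ l _j$-finiteness by Lemma \ref{Fe-BBL}, but only the stated necessary conditions are needed. Intersecting the list above with these conditions and with the requirement that $V$ be infinite dimensional, i.e.\ that $\lambda$ not be $\germ g $-dominant integral, is a short case check: for $\lambda=a\omega_1$ (resp.\ $a\omega_n$) the only nonzero pairing is at $\alpha_1$ (resp.\ $\alpha_n$), which survives precisely when $j=0$ (resp.\ $j=n-1$) with $a\notin\N$; for $\lambda=a\omega_i-(1+a)\omega_{i+1}$ the nonzero pairings $a$ and $-(1+a)$ cannot both lie in $\N$, so $\alpha_{j+1}\in\{\alpha_i,\alpha_{i+1}\}$, that is $i=j$ (then $a=m\in\N$ and $\lambda=m\omega_j-(1+m)\omega_{j+1}$) or $i=j+1$ (then $-(1+a)=m\in\N$ and $\lambda=(-1-m)\omega_{j+1}+m\omega_{j+2}$), both non-dominant automatically.

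Finally I would translate the surviving weights, grouped by the three cases $j=0$, $0<j<n-1$, $j=n-1$, into the modules $N(\cdot)$ via the dictionary recalled after Proposition \ref{prop-HWd1}: $L(a\omega_1)=N(a,0,\ldots,0)$, $L(a\omega_i-(1+a)\omega_{i+1})=N(\underbrace{-1,\ldots,-1}_{i},-1-a,0,\ldots,0)$, $L(a\omega_n)=N(-1,\ldots,-1,-1-a)$, observing that $-1-a$ runs over $\C\setminus\Z_{<0}$ as $a$ runs over $\C\setminus\N$, and passing to the contragredient wherever the model attached in \cite{BBL97} is the one associated with the contragredient rather than with $V$ itself; this reproduces the three lists in the statement.

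The step I expect to be the real obstacle is this last one: the representation theory — reduction to highest weight modules and the three-parameter list of admissible highest weights — is already supplied by Corollary \ref{cor-gKd1} and Proposition \ref{prop-HWd1}, so the work lies in keeping the index conventions straight through the dictionary and, at the endpoints $j=0$ and $j=n-1$, in pinning down which of $V$ and its contragredient is the one written in the $N(\cdot)$ normal form; the collisions among $i$, $j$ and $j+1$ there are exactly what make the two boundary cases look unlike the generic one.
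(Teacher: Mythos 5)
Your reduction---Corollary \ref{cor-gKd1} to pass to a highest weight module (after checking that the contragredient inherits all the hypotheses), Proposition \ref{prop-HWd1} to list the candidate highest weights, and then the observation that the $\germ g $-highest weight vector generates a finite dimensional $\germ l _j$-module, forcing $\langle\lambda,\alpha_l^\vee\rangle\in\N$ for all $l\neq j+1$---is exactly the first half of the paper's proof, which phrases the same integrality condition as ``$H_1,\ldots,H_{n-1}$ act on $x$ by non negative integers''; your elimination of the three families is the same case check. Where you diverge is that the paper then spends the bulk of its proof on the converse: it exhibits the $\germ l _0$-highest weight vectors $x(k_1,0,\ldots,0)$ of $N(a,0,\ldots,0)$ from formulae \eqref{eq-Na}, uses the central element $nH_0+(n-1)H_1+\cdots+H_{n-1}$ to show each $\mathcal{U}(\germ l _0)x(k_1,0,\ldots,0)$ is simple and finite dimensional, and deduces the branching $N(a,0,\ldots,0)_{|\germ l _0}=\bigoplus_k\mathcal{U}(\germ l _0)x(k,0,\ldots,0)$. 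You are right that the proposition as literally stated is a one-way implication, so your necessity argument suffices for it; but your parenthetical that the converse ``gives back $\germ l _j$-finiteness by Lemma \ref{Fe-BBL}'' is not justified---that lemma only gives local finiteness or injectivity of individual root vectors, not the decomposition into a direct sum of finite dimensional $\germ l _j$-modules with finite multiplicities, which is exactly why the paper argues via the explicit branching.

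The one place where your write-up does not deliver what it promises is the endpoint $j=n-1$, which you yourself identify as the crux and then do not execute. The surviving middle-family weight there is $m\omega_{n-1}-(1+m)\omega_n$ (the case $i=j=n-1$; the case $i=j+1=n$ is vacuous), and under the dictionary this is $N(\underbrace{-1,\ldots,-1}_{n-1},-1-m)$ with \emph{no} trailing zero, whereas the statement prints $N(-1,\ldots,-1,-1-m,0)$, i.e. the module of highest weight $m\omega_{n-2}-(1+m)\omega_{n-1}$---a weight that is not dominant integral on $\alpha_{n-1}\in\germ l _{n-1}$ and whose contragredient fails at $\alpha_2$, so it would be eliminated by your own criterion. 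Applying the diagram automorphism to your (correct) case $j=0$ list confirms that the second family for $j=n-1$ should be $L(m\omega_{n-1}-(1+m)\omega_n)$; so the discrepancy is most plausibly a misprint in the target statement, but since you claim your translation ``reproduces the three lists,'' you need to either carry out this check and record the mismatch or reconcile the two normal forms explicitly.
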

\begin{proof} From corollary \ref{cor-gKd1}, $V$ or its contragredient is a simple highest weight module. Therefore we know that $V$ or its contragredient is given by proposition \ref{prop-HWd1}. It thus remains to check whether or not the modules in proposition\ref{prop-HWd1} satisfy the restriction property.

Assume that $j=0$. Let $0<k<n-1$ and consider the module $V=N(-1,\ldots,-1,a,0,\ldots,0)$ where $a$ is a complex number in position $k+1$. Then the highest weight $x$ for this module satisfies:
$$H_{k-1}\cdot x=(-1-a)x,\ H_k\cdot x=ax.$$ If $k-1\not=0$, then by our assumption on $V$, $x$ should generate a finite dimensional module. This imposes that the vectors $H_1,\ldots,H_{n-1}$ of the Cartan subalgebra acts on $x$ by non negative integers. Therefore $a$ and $-1-a$ should be non negative integers, which is impossible. The same argument shows that for $k=1$ we must have $a\in \N$.

Consider now the module $N(-1,\ldots,-1,a)$ for $a\in \C$. Using once again the same argument, we show that necessarily $a$ should be a negative integer. In this case, the module is finite dimensional.

We need to show now that $N(a,0,\ldots,0)$ for $a\in \C\setminus\N$ and\newline $N(-1,m,0,\ldots,0)$ for $m\in \N$ do satisfy the restriction property. Let us prove it for $N(a,0\ldots,0)$.

We want to find those linear combinations $\sum\ \mu_kx(k)$ which are highest weight vectors for the action of $\germ l _0$. From the explicit action given by formulae \eqref{eq-Na}, we conclude that the highest weight vectors are the linear combinations of the following linearly independent highest weight vectors:
$$x(k_1,0,\ldots,0),\ k_1\in \N.$$ We shall prove now that the module $\cal U (\germ l _0)x(k_1,0,\ldots,0)$ is a simple highest weight module. Since it is a highest weight module, it is indecomposable. It is simple if and only if it does not contain a highest weight vector linearly independent of $x(k_1,0,\ldots,0)$. But any such vector is a linear combination of $x(k',0,\ldots,0)$ for some $k'\in \N$. However the action of $nH_0+(n-1)H_1+\cdots +H_{n-1}$, vector generating the center of $\germ l _0$, on $x(k',0,\ldots,0)$ is:
$$(nH_0+(n-1)H_1+\cdots +H_{n-1}) \cdot x(k',0,\ldots,0)=na-(n+1)k'.$$ Therefore the center acting as a scalar on $\cal U (\germ l _0)x(k_1,0,\ldots,0)$, we conclude that there is no highest weight vector in this module but the multiples of $x(k_1,0,\ldots,0)$. Thus proving that the module is simple. Hence it is clear that we have the following branching:
$$N(a,0,\ldots,0)_{|\germ l _0}=\bigoplus_{k\in \N}\ \cal U (\germ l _0)x(k,0,\ldots,0).$$ Therefore we proved that $N(a,0,\ldots,0)$ is a $(\germ g ,\germ l _0)$-module of finite type as asserted. The proof for $N(-1,m,0,\ldots,0)$ is the same.

The case $j>0$ is analogous.
\end{proof}

%%----------------------------------------------------------------------------------------------------------------------------

\section{Type $A$ case}

In this section, we shall find which degree $1$ $\germ sl (n+1,\C)$-module integrate to a continuous representation of some real Lie group whose complexified Lie algebra is $\germ sl (n+1,\C)$.

\subsection{A natural action of $SU(1,n)$}

Let $n$ be a positive integer. Let $SU(1,n)$ denote the subgroup of $GL(n+1,\C)$ consisting of those matrices $g$ such that 
$$^t\!\bar{g}\times \left(\begin{array}{cc}
-1 & \\
 & I_{n}\end{array}\right) \times g = \left(\begin{array}{cc}
-1 & \\
 & I_{n}\end{array}\right)$$ and whose determinant is $1$. We shall label rows and columns from $0$ to $n$. This is a real Lie group. It acts on $\Sp_{n}:=\{(z_{j})\in \C^n\ | \ \sum_{j=1}^n\ |z_{j}|^2=1\}$ via 
 $$g\cdot \left(\begin{array}{c}
 z_{1}\\
 \vdots\\
 z_{n}\end{array}\right) = \left(\begin{array}{c}
 \frac{\displaystyle\sum_{j=0}^n\ g^j_{1}z_{j}}{\displaystyle\sum_{j=0}^n\ g^j_{0}z_{j}}\\
\vdots\\
\frac{\displaystyle\sum_{j=0}^n\ g^j_{n}z_{j}}{\displaystyle\sum_{j=0}^n\ g^j_{0}z_{j}}\end{array}\right),$$ where $g=(g^j_{k})_{0\leq j,k\leq n}\in SU(1,n)$ and $z_{0}=1$. Since $SU(1,n)$ preserves the quadratic form $-|Z_{0}|^2+|Z_{1}|^2+\cdots +|Z_{n}|^2$, the denominator is never $0$ and $g\cdot z$ is in $\Sp_{n}$ for any $z\in \Sp_{n}$. Denote by $d\sigma$ the measure on $\Sp_{n}$ induced from the Lebesgue measure of $\C^n$ and by $\Omega_{n}$ the volume of $\Sp_{n}$. It is well known that $\Omega_{n}=\frac{2\pi^n}{(n-1)!}$. We also denote by $\cal H (\C^n)$ the space of holomorphic functions from $\C^n$ to $\C$. Then the action of $SU(1,n)$ on $\Sp_{n}$ induces a natural continuous representation on $L^2(\Sp_{n},\frac{d\sigma}{\Omega_{n}})\cap \cal H (\C^n)$. We can further construct a unitary representation $\rho$ on this space by 
$$\rho(g)(\varphi)(z):=\left(\sum_{j=0}^n\ (g^{-1})^j_{0}z_{j}\right)^{-n}\times \varphi(g^{-1}\cdot z).$$

Let $k=(k_{j})\in \N^n$. Set $P(k)(z):=\displaystyle\prod_{j=1}^n\ z_{j}^{k_{j}}$. Then the family $(P(k))_{k\in \N^n}$ is an orthogonal basis for the Hilbert space $L^2(\Sp_{n},\frac{d\sigma}{\Omega_{n}})\cap \cal H (\C^n)$. Moreover, we have $\|P(k)\|^2=\frac{\displaystyle\prod_{j=1}^n\ k_{j}!}{\displaystyle\prod_{j=1}^{|k|}\ (j+n-1)}$, where $|k|:=\displaystyle\sum_{j=1}^n\ k_{j}$.

Consider the following $1$-parameter families:
$$e^{itH_{1}}:=\left(\begin{array}{cccccc}
1 & & & & & \\
 & e^{-it} & & & & \\
 & & e^{it} & & & \\
 & & & 1 & & \\
 & & & & \ddots & \\
 & & & & & 1\end{array}\right), \ldots , e^{itH_{n-1}}:=\left(\begin{array}{ccccc}
1 & & & & \\
 & \ddots & & & \\
 & & 1 & & \\
 & & & e^{-it} & \\
 & & & & e^{it}\end{array}\right).$$
 $$e^{tX_{1}}:=\left(\begin{array}{cccccc}
1 & & & & & \\
 & \cos t & -\sin t & & & \\
 & \sin t & \cos t & & & \\
 & & & 1 & & \\
 & & & & \ddots & \\
 & & & & & 1\end{array}\right),e^{tY_{1}}:=\left(\begin{array}{cccccc}
1 & & & & & \\
 & \cos t & -i\sin t & & & \\
 & -i\sin t & \cos t & & & \\
 & & & 1 & & \\
 & & & & \ddots & \\
 & & & & & 1\end{array}\right), \ldots, $$
 $$e^{tX_{n-1}}:=\left(\begin{array}{ccccc}
1 & & & & \\
 & \ddots & & & \\
 & & 1 & & \\
 & & & \cos t & -\sin t \\
 & & & \sin t & \cos t \end{array}\right), e^{tY_{n-1}}:=\left(\begin{array}{ccccc}
1 & & & & \\
 & \ddots & & & \\
 & & 1 & & \\
 & & & \cos t & -i\sin t \\
 & & & -i\sin t & \cos t \end{array}\right),$$
 $$e^{tX_{0}}:=\left(\begin{array}{ccccc}
\cosh t & -\sinh t & & & \\
-\sinh t & \cosh t & & & \\
 & & 1 & & \\
 & & & \ddots & \\
 & & & & 1\end{array}\right), e^{tY_{0}}:=\left(\begin{array}{ccccc}
\cosh t & -i\sinh t & & & \\
i\sinh t & \cosh t & & & \\
 & & 1 & & \\
 & & & \ddots & \\
 & & & & 1\end{array}\right), $$
 $$e^{itH_{0}}:=\left(\begin{array}{ccccc}
e^{-it} & & & & \\
 & e^{it} & & & \\
 & & 1 & & \\
 & & & \ddots & \\
 & & & & 1\end{array}\right).$$
 Then the Lie algebra $\germ su (1,n)$ of $SU(1,n)$ is generated (as a Lie algebra) by $$iH_{0},\ldots ,iH_{n-1},X_{0},\ldots ,X_{n-1},Y_{0},\ldots Y_{n-1}.$$ Set 
 $$E_{0}:=\frac{X_{0}+iY_{0}}{2},\ F_{0}:=\frac{X_{0}-iY_{0}}{2},$$
 $$E_{j}:=-\frac{X_{j}+iY_{j}}{2},\ F_{j}:=\frac{X_{j}-iY_{j}}{2},\ 1\leq j\leq n-1.$$ Then $(H_{j},E_{j},F_{j})_{0\leq j\leq n}$ generates a Lie algebra $\germ g $ isomorphic to $\germ sl (n+1,\C)$. The Cartan subalgebra $\germ h $ of $\germ g $ is the subalgebra generated by $\{H_{0},\ldots, H_{n-1}\}$. We can compute as usual the action of $\germ g $ on the basis $(P(k))_{k\in \N^n}$. We get:
\begin{subequations}
\label{eq-action}
\begin{align}
 & \left\{\begin{array}{ccc}
 H_{0}\cdot P(k) & = & (-n-|k|-k_{1})P(k)\\
 E_{0}\cdot P(k) & = & k_{1}P(k-\epsilon_{1})\\
 F_{0}\cdot P(k) & = & (-n-|k|)P(k+\epsilon_{1})\end{array}\right.,\\
 & \left\{\begin{array}{ccc}
 H_{j}\cdot P(k) & = & (k_{j}-k_{j+1})P(k)\\
 E_{j}\cdot P(k) & = & k_{j+1}P(k-\epsilon_{j+1}+\epsilon_{j})\\
 F_{j}\cdot P(k) & = & k_{j}P(k+\epsilon_{j+1}-\epsilon_{j})\end{array}\right.,\ \forall\ 1\leq j\leq n-1,\end{align}
 \end{subequations} where $\epsilon_{j}$ is the vector in $\N^n$ whose entries are all zero except the $j$th entry which is $1$.
 
In the sequel we shall deform this infinitesimal representation and show the $1$-parameter deformation thus constructed integrates to a continuous representation of the universal cover of $SU(1,n)$. We shall further explicit those values of the parameter such that the representation is unitary.
 
 \subsection{Deformation of the natural action of $\germ sl (n+1,\C)$}
 
To each $k\in \N^n$, let us associate a vector $e(k)$. Let 
$$\cal H :=\left\{u:=\sum_{k\in \N^n}\ u_{k}e(k)\ \left| \ \sum_{k\in \N^n}\ |u_{k}|^2\frac{\displaystyle\prod_{j=1}^n\ k_{j}!}{\displaystyle\prod_{j=1}^{|k|}\ (j+n-1)}<\infty\right. \right\}.$$ We define on $\cal H $ a Hilbert space structure by requiring that the basis $(e(k))$ is orthogonal and that $\|e(k)\|^2=\frac{\displaystyle\prod_{j=1}^n\ k_{j}!}{\displaystyle\prod_{j=1}^{|k|}\ (j+n-1)}$. Denote by $G$ the universal cover of $SU(1,n)$. According to the previous subsection, there is a continuous representation $\rho$ of $G$ (in fact, a unitary representation of $SU(1,n)$) corresponding to the representation of $\germ g $ given on $\cal H $ by formulae \eqref{eq-action}.

Let $a\in \C\setminus\N$. For any $l\in \N$, set $\mu_{a}(l):=\sqrt{\displaystyle\prod_{j=1}^l\ \frac{j+n-1}{|j-a-1|}}$. Note that $\mu_{a}(l)$ is a well defined positive real number such that $\mu_{-n}(l)=1$. We define now operators $(H_{j}(a),E_{j}(a), F_{j}(a))_{0\leq j\leq n-1}$ on $\cal H $ by their action on the basis $e(k)$ as follows:
$$\forall \ 1\leq j \leq n-1, H_{j}(a)=H_{j},\ E_{j}(a)=E_{j},\ F_{j}(a)=F_{j},$$
\begin{subequations}
\begin{align}
(H_{0}(a)-H_{0})\cdot e(k) & = (n+a)e(k),\\
(E_{0}(a)-E_{0})\cdot e(k) & = k_{1}\left(\frac{\mu(|k|-1)}{\mu(|k|)}-1\right)e(k-\epsilon_{1}),\\
(F_{0}(a)-F_{0})\cdot e(k) & = \left((a-|k|)\frac{\mu(|k|+1)}{\mu(|k|)}+n+|k|\right)e(k+\epsilon_{1}).
\end{align}
\end{subequations}
Notice that if $a=-n$ then all the new operators coincide with the corresponding undeformed operator. It is easy to check that the new operators give rise to another representation of $\germ sl (n+1,\C)$. We shall refer to the space of this representation as $\cal H _{a}$ (even though as a Hilbert space it is nothing but $\cal H $). In the sequel we will use 
$$m^-_{k}:=k_{1}\left(\frac{\mu(|k|-1)}{\mu(|k|)}-1\right),\ m^+_{k}:=(a-|k|)\frac{\mu(|k|+1)}{\mu(|k|)}+n+|k|.$$

\begin{remark}\label{rema-fdd1}
Set $x(k):=\mu(|k|)e(k)$. Then $\|x(k)\|^2=\frac{\displaystyle\prod_{j=1}^n\ k_{j}!}{\displaystyle\prod_{j=1}^{|k|}\ |j-a-1|}$. Moreover, $x(k)$ is also a basis for $\cal H _{a}$ and the deformed action of $\germ sl (n+1,\C)$ on $\cal H _{a}$ is precisely the one given in \cite{BBL97}.
\end{remark}

\subsection{Integrability of the representation $\cal H _{a}$}

To prove the integrability of the representation $\cal H _{a}$ we shall use a criterion of J{\o}rgensen and Moore \cite{JM84}. Let us recall it. Let $\cal H '$ be a Hilbert space. Let $ D $ be a dense subspace. Denote by $\|\cdot\|_{_{0}}$ the Hilbert norm. By $\cal A (D)$ we mean the set of all operators on $D$. Let $A_{0}=Id$ and let $A_{1},\ldots ,A_{d}$ be a basis for some (finite dimensional) Lie algebra included in $\cal A (D)$. We define inductively a norm $\|\cdot\|_{_{l}}$ on $D$ by setting $\|u\|_{_{l+1}}:=\max \{\|A_{k}u\|_{_{l}},\ 0\leq k\leq d\}$. Denote by $D_{l}$ the completion of $D$ with respect to $\|\cdot \|_{_{l}}$ and by $L_{j}$ the space of continuous operators of $D_{j}$. Let $L^{u}(D_{\infty}):=\cap\{L_{j},\ j\geq 0\}$ and $\cal A _{\infty}(D):=\cal A (D)\cap L^{u}(D_{\infty})$. This is the set of operators on $D$ bounded for all the norms $\|\cdot\|_{_{l}}$.

\begin{theorem}[J{\o}rgensen-Moore]
Let $G$ be a connected simply-connected Lie group, whose corresponding Lie algebra is denoted $\germ g _{\R}$. Let $\pi_{0}$ be a continuous representation of $G$ on $\cal H '$. Set $\cal L _{0}:=d\pi_{0}(\germ g _{\R})$. Let $D:=C^{\infty}(\pi_{0})$. Let $S_{0}$ be a set of Lie generator for $\cal L _{0}$. Let $f:S_{0}\rightarrow \cal A _{\infty}(D)$ be such that $S:=\{A+f(A),\ A\in S_{0}\}$ generates a finite dimensional Lie algebra $\cal L $. Then the representation $\cal L $ can be integrated into a continuous representation $\pi$ of $G$ such that $d\pi(\germ g _{\R})=\cal L $.
\end{theorem}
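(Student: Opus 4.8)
The plan is to read the hypothesis as a perturbation of $\pi_{0}$ by operators bounded on the full Sobolev scale attached to $\pi_{0}$, and to integrate the perturbed Lie algebra by the classical analytic-vector criterion (Nelson; Flato--Simon--Snellman--Sternheimer) for the connected simply-connected group $G$. The condition $f(S_{0})\subseteq \cal A _{\infty}(D)$ is imposed precisely so as to keep the analytic vectors of $\pi_{0}$ analytic for $\cal L $.

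First I would fix the functional-analytic frame. Since $A_{0}=Id$ and each element of $S_{0}$ sends $D_{l+1}$ continuously into $D_{l}$ (with norm $\leq 1$), iterated brackets show that all of $\cal L _{0}=d\pi_{0}(\germ g _{\R})$ acts by continuous operators on the Fr\'echet space $D_{\infty}=\bigcap_{l\geq 0}D_{l}$, which is classically identified with the space $C^{\infty}(\pi_{0})$ of smooth vectors with its usual topology and on which $\pi_{0}$ restricts to a continuous representation of $G$; here one uses that $S_{0}$ Lie-generates $\cal L _{0}$. By hypothesis each $f(A)$, $A\in S_{0}$, is bounded for every norm $\|\cdot\|_{l}$, $l\geq 0$; in particular (case $l=0$) it is a bounded operator of $\cal H '$, and it acts continuously on $D_{\infty}$. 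Hence every generator $A+f(A)$ acts continuously on $D_{\infty}$, the bracket relations are transported to $\cal L $ so that $\cal L $ is a homomorphic image of $\germ g _{\R}$; and, since each $A=d\pi_{0}(X)$ in $\cal L _{0}$ generates the $C_{0}$-group $t\mapsto\pi_{0}(\exp tX)$ on $\cal H '$, the Phillips bounded-perturbation theorem shows that $A+f(A)$ likewise generates a $C_{0}$-group on $\cal H '$, and --- by the analogous boundedness of $f(A)$ on every $D_{l}$ --- on $D_{\infty}$.

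The heart of the argument is to show that every analytic vector $v$ of $\pi_{0}$ --- and these are dense in $\cal H '$ by Nelson's theorem --- remains an analytic vector for $\cal L $. For such a $v$ one has Sobolev estimates $\|v\|_{l}\leq CR^{l}l!$ for suitable $C,R>0$ and all $l$. Given a word of length $n$ in the generators $A_{i_{1}}+f(A_{i_{1}}),\ldots ,A_{i_{n}}+f(A_{i_{n}})$, expand it into its $2^{n}$ monomials in the letters $A_{i}$ and $f(A_{i})$. Each letter $f(A_{i})$ is bounded on every $\|\cdot\|_{l}$ by a common constant $c$ (because $f(S_{0})\subseteq\cal A _{\infty}(D)$), while each letter $A_{i}$ lowers the Sobolev level by one (with norm $\leq 1$); so an easy induction shows that a monomial containing $j$ letters of type $A_{i}$ sends $v$ to a vector of Hilbert norm at most $c^{\,n-j}\|v\|_{j}\leq c^{\,n-j}CR^{j}j!$. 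Summing over the $\binom{n}{j}$ such monomials yields a bound of the form $C'(R')^{n}n!$, so $v$ is an analytic vector for each generator of $\cal L $; by the standard fact that analyticity for a Lie-generating set of a finite-dimensional Lie algebra of operators sharing the invariant domain $D_{\infty}$ propagates to the whole algebra, $v$ is analytic for all of $\cal L $.

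It then remains to invoke the integration theorem: $\cal L $ is a finite-dimensional Lie algebra of closable operators on $\cal H '$ with common dense invariant domain $D_{\infty}$, each generator of which generates a $C_{0}$-group, and which admits a dense set of common analytic vectors; hence the homomorphism $\germ g _{\R}\rightarrow \cal L $ exponentiates to a continuous representation $\pi$ of $G$ with $d\pi(\germ g _{\R})=\cal L $. I expect the analytic-vector step to be the main obstacle: one must control precisely how the unbounded, level-lowering operators $d\pi_{0}(A_{i})$ interleave with the level-preserving perturbations $f(A_{i})$, and it is the uniform boundedness of each $f(A_{i})$ on \emph{all} the norms $\|\cdot\|_{l}$ simultaneously that makes the estimate close. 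A secondary point requiring care is verifying the technical hypotheses (closability, and passage from a local $G$-action to a global one) of whichever integrability theorem one cites.
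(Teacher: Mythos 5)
The paper does not prove this theorem at all: it is quoted as a black box from J{\o}rgensen--Moore \cite{JM84}, so there is no internal argument to compare yours against, and your attempt must be judged on its own. Its overall template (perturb, show analytic vectors survive, exponentiate) is classical and reasonable, but one step fails as written. In the key estimate you assert that ``each letter $f(A_{i})$ is bounded on every $\|\cdot\|_{l}$ by a common constant $c$ (because $f(S_{0})\subseteq\mathcal{A}_{\infty}(D)$)''. That is not what membership in $\mathcal{A}_{\infty}(D)$ gives: by the paper's definition, $\mathcal{A}_{\infty}(D)=\mathcal{A}(D)\cap\bigcap_{j}L_{j}$ only guarantees, for each level $l$, \emph{some} bound $c_{l}$ for $f(A_{i})$ on $D_{l}$, and nothing forces $\sup_{l}c_{l}<\infty$. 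In your expansion of a word of length $n$ into $2^{n}$ monomials, a letter $f(A_{i})$ standing to the left of $m$ letters of type $A$ must be estimated in the norm $\|\cdot\|_{m}$, so the honest bound on a monomial is $\prod_{i}c_{l_{i}}\cdot\|v\|_{j}$ with the levels $l_{i}$ ranging up to $j\leq n$. Carrying this through gives $\|W_{n}v\|\leq C\,n!\,R^{n}e^{M_{n}/R}$ with $M_{n}=\max_{l\leq n}c_{l}$, which is of the required form $C'(R')^{n}n!$ only when $M_{n}=O(n)$; the hypotheses allow $c_{l}$ to grow arbitrarily fast, in which case $v$ need not remain analytic for $\mathcal{L}$. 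Since the density of common analytic vectors is exactly what your final appeal to a Nelson/FS$^{3}$-type exponentiation theorem requires, this gap is fatal to the argument as written. (A secondary, unresolved point, which you flag yourself: for non-unitary continuous Hilbert-space representations, ``dense analytic vectors for each generator'' is not by itself a sufficient exponentiation criterion, so the precise integrability theorem being invoked still needs to be pinned down.)

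The proof in \cite{JM84} sidesteps this issue precisely because it never needs a bound that is uniform in the Sobolev level: the perturbed one-parameter groups are constructed by the Duhamel/Dyson integral equation $V_{A}(t)=U_{A}(t)+\int_{0}^{t}U_{A}(t-s)f(A)V_{A}(s)\,ds$ \emph{separately} on each Banach space $D_{l}$, where only the single constant $c_{l}$ enters, and the resulting $C_{0}$-groups on the scale are then assembled into a representation of the simply connected $G$ by verifying the commutation relations at the group level. To salvage your route you would need either to add the hypothesis that the $f(A)$ are uniformly bounded over the whole scale (which would restrict the theorem's applicability, including to the deformations used later in this paper), or to replace the analytic-vector step by such a level-by-level construction.
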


To apply the theorem to our situation, we set $S_{0}:=\{iH_{j}, X_{j}, Y_{j},\ 0\leq j\leq n-1\}$. The Hilbert space is $\cal H $, the dense subset is 
$$D:=\left\{u=\sum_{k\in \N^n}\ u_{k}e(k)\in \cal H \ \left| \ \sum_{k\in \N^n}\ |k|^Nu_{k}e(k)\in \cal H , \ \forall \ N\in \N \right. \right\}.$$ The function $f$ is given by the formulae \eqref{eq-action}. At this point, we need to check that the image of $f$ is in $\cal A _{\infty}(D)$, i.e. to check that the operators defined on $D$ by formulae \eqref{eq-action} are bounded for all the norms $\|\cdot\|_{_{l}}$. As $f(iH_{j})=f(X_{j})=f(Y_{j})=0$ for $j\geq 1$, we only need to consider the three operators $f(iH_{0})$, $f(X_{0})$ and $f(Y_{0})$. As $f(iH_{0})$ is a scalar operator, the boundedness is clear. We have to prove it for $f(X_{0})$ and $f(Y_{0})$. First we note the following:

\begin{lemma}
The operators $f(X_{0})$ and $f(Y_{0})$ are bounded for all the norms $\|\cdot\|_{_{l}}$ if and only if the operators $f(E_{0})$ and $f(F_{0})$ are.
\end{lemma}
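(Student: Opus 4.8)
The plan is to exploit the fact that $f(X_0) = \frac{1}{2}(f(E_0) + f(F_0))$ and $f(Y_0) = \frac{1}{2i}(f(E_0) - f(F_0))$, since from the definitions $E_0 = \frac{X_0 + iY_0}{2}$ and $F_0 = \frac{X_0 - iY_0}{2}$ give, conversely, $X_0 = E_0 + F_0$ and $Y_0 = \frac{1}{i}(E_0 - F_0)$; and $f$ is linear on the relevant span. Thus $f(X_0)$ and $f(Y_0)$ are fixed linear combinations of $f(E_0)$ and $f(F_0)$, and vice versa, $f(E_0) = \frac{1}{2}(f(X_0) + i f(Y_0))$, $f(F_0) = \frac{1}{2}(f(X_0) - i f(Y_0))$.

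From here the argument is a two-line stability remark: the set of operators on $D$ that are bounded with respect to every norm $\|\cdot\|_l$ — equivalently, the set $\cal A _\infty(D)$ — is a linear subspace of $\cal A (D)$. Indeed, if $T$ and $S$ satisfy $\|Tu\|_l \leq C_l \|u\|_l$ and $\|Su\|_l \leq C'_l\|u\|_l$ for all $l$ and all $u \in D$, then for scalars $\alpha, \beta \in \C$ one has $\|(\alpha T + \beta S)u\|_l \leq (|\alpha| C_l + |\beta| C'_l)\|u\|_l$, so $\alpha T + \beta S$ is again bounded for every $\|\cdot\|_l$. Applying this in one direction shows that boundedness of $f(E_0)$ and $f(F_0)$ for all $\|\cdot\|_l$ implies the same for $f(X_0) = f(E_0)+f(F_0)$ and $f(Y_0) = -i(f(E_0)-f(F_0))$; applying it in the other direction gives the converse from $f(E_0) = \frac{1}{2}(f(X_0)+if(Y_0))$ and $f(F_0) = \frac{1}{2}(f(X_0)-if(Y_0))$.

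There is essentially no obstacle here: the lemma is a bookkeeping step whose sole purpose is to license the reduction, made immediately afterward, to estimating $f(E_0)$ and $f(F_0)$ — operators whose action on the basis $e(k)$ is given explicitly by the formulae \eqref{eq-action} (they are weighted shifts, moving $k$ to $k \pm \epsilon_1$ with polynomial coefficients in $|k|$), and which are therefore much more convenient to bound directly than the trigonometric/hyperbolic combinations $f(X_0), f(Y_0)$. The only point requiring a word of care is that the scalars relating the two pairs are constants independent of $l$, so that the multiplicative constant in the norm estimate degrades only by a bounded factor at each level and boundedness is genuinely preserved for \emph{all} $l$ simultaneously; this is exactly what the linearity argument above delivers. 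I would therefore write the proof in three short sentences: record the linear change of basis between $\{f(X_0), f(Y_0)\}$ and $\{f(E_0), f(F_0)\}$, note that $\cal A _\infty(D)$ is a vector space (or simply that each $\|\cdot\|_l$ is a norm, hence subadditive and absolutely homogeneous), and conclude.
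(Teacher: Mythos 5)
Your proof is correct and is exactly the paper's argument: the paper disposes of this lemma in one line by noting that $X_{0},Y_{0}$ and $E_{0},F_{0}$ are linear combinations of each other, which is precisely the change-of-basis-plus-linearity-of-$\mathcal{A}_{\infty}(D)$ observation you spell out. (The stray factor $\tfrac{1}{2}$ in your opening formula for $f(X_{0})$ is a slip --- since $X_{0}=E_{0}+F_{0}$ --- but as scalar constants are irrelevant to boundedness it affects nothing.)
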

\begin{proof} This is clear since $X_{0}$ and $Y_{0}$ are linear combination of $E_{0}$ and $F_{0}$ and vice-versa.
\end{proof}

\begin{lemma}\label{lem-l1}
The operators $f(E_{0})$ and $f(F_{0})$ are bounded for all the norms $\|\cdot\|_{_{l}}$ if and only if they are bounded for all the norms constructed using the set $\tilde{S}_{0}:=\{H_{j},E_{j},F_{j},\ 0\leq j\leq n-1\}$ instead of $S_{0}$.
\end{lemma}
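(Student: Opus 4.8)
The plan is to observe that the two families of norms, the one built from $S_0 = \{iH_j, X_j, Y_j\}$ and the one built from $\tilde S_0 = \{H_j, E_j, F_j\}$, are \emph{equivalent} on $D$, norm by norm at each level $l$, and that therefore boundedness of a fixed operator with respect to one family at all levels is equivalent to boundedness with respect to the other family at all levels. This is essentially a linear-algebra observation propagated through the inductive definition of the norms.

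First I would set up the comparison at the level $l=1$. The vectors $iH_0, X_0, Y_0$ on the one hand and $H_0, E_0, F_0$ on the other hand span the \emph{same} three-dimensional complex subspace of operators, since $E_0 = \frac{X_0 + iY_0}{2}$, $F_0 = \frac{X_0 - iY_0}{2}$, $X_0 = E_0 + F_0$, $Y_0 = -i(E_0 - F_0)$ and $H_0 = -i(iH_0)$; and for $j \geq 1$ the same holds with the sign conventions $E_j = -\frac{X_j + iY_j}{2}$, $F_j = \frac{X_j - iY_j}{2}$. Hence there are fixed constants $c, C > 0$ (depending only on $n$) such that for every $u \in D$ one has $c\,\|u\|_{1}^{S_0} \leq \|u\|_{1}^{\tilde S_0} \leq C\,\|u\|_{1}^{S_0}$, because each generator of one set is a fixed linear combination (with the identity adjoined, which is common to both) of the generators of the other. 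Then I would argue by induction on $l$: assuming $\|\cdot\|_{l}^{S_0}$ and $\|\cdot\|_{l}^{\tilde S_0}$ are equivalent, the definition $\|u\|_{l+1} = \max\{\|A_k u\|_{l}\}$ over the respective generating set shows that $\|u\|_{l+1}^{\tilde S_0}$ is bounded above and below by constant multiples of $\max$ over the $S_0$-generators of $\|\cdot\|_{l}^{\tilde S_0}$-norms, which by the inductive hypothesis is comparable to the $\max$ over the $S_0$-generators of $\|\cdot\|_{l}^{S_0}$-norms, i.e.\ to $\|u\|_{l+1}^{S_0}$; one has to use that a linear combination of operators $A = \sum \lambda_i B_i$ satisfies $\|Au\|_{l} \leq (\sum |\lambda_i|) \max_i \|B_i u\|_{l}$, which holds for each seminorm $\|\cdot\|_l$. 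The constants accumulate but remain finite at each fixed level $l$, which is all that is needed.

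Finally, once the norms $\|\cdot\|_{l}^{S_0}$ and $\|\cdot\|_{l}^{\tilde S_0}$ are equivalent for every $l$, an operator $T$ on $D$ is bounded for $\|\cdot\|_{l}^{S_0}$ if and only if it is bounded for $\|\cdot\|_{l}^{\tilde S_0}$, with the operator norms differing by at most the product of the two comparison constants; applying this to $T = f(E_0)$ and $T = f(F_0)$ and quantifying over all $l$ gives the claim. The one point to be careful about — and the only place where anything could go wrong — is that the comparison constants at level $l+1$ genuinely depend only on $l$ and on $n$, not on $u$, so that "bounded for all $l$" is preserved; this is immediate from the construction since at each inductive step we multiply by finitely many fixed scalars coming from the change-of-basis matrix between $\{iH_j,X_j,Y_j\}$ and $\{H_j,E_j,F_j\}$. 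There is no serious obstacle here; the lemma is a bookkeeping statement allowing us to work with the computationally more convenient root-vector basis $\tilde S_0$ in the sequel.
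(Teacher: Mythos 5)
Your proposal is correct and follows exactly the paper's argument: the paper's proof consists of the single observation that the elements of $S_{0}$ and $\tilde{S}_{0}$ are linear combinations of one another, and your write-up simply makes explicit the resulting level-by-level equivalence of the norms via induction on $l$. No discrepancy.
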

\begin{proof} Again, this follows from the fact that we can express the elements of $S_{0}$ as linear combinations of those in $\tilde{S}_{0}$ and vice-versa.
\end{proof}

\begin{proposition}
The operators $f(E_{0})$ and $f(F_{0})$ are bounded for all the norms $\|\cdot\|_{_{l}}$.
\end{proposition}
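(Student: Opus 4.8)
The plan is to reduce the statement to two soft estimates after pinning down the J{\o}rgensen--Moore norms explicitly. By Lemma~\ref{lem-l1} it is enough to prove boundedness of $f(E_{0})$ and $f(F_{0})$ with respect to the norms built from the generating set $\tilde S_{0}=\{H_{j},E_{j},F_{j},\ 0\le j\le n-1\}$ together with $\mathrm{Id}$; we keep the notation $\|\cdot\|_{l}$ for these. The first preliminary step is to record the asymptotics of the weights $\mu(l)=\mu_{a}(l)$: writing $\alpha=\Re a$ and using $\mu(l)^{2}/\mu(l-1)^{2}=(l+n-1)/|l-a-1|$ together with $|l-1-a|=l-1-\alpha+O(l^{-1})$, one gets
$$\frac{\mu(l-1)}{\mu(l)}=1-\frac{n+\alpha}{2l}+O(l^{-2}),\qquad\frac{\mu(l+1)}{\mu(l)}=1+\frac{n+\alpha}{2l}+O(l^{-2}).$$
Substituting these into the definitions of $m^{\mp}_{k}$ yields exactly the two bounds needed later: there is $C>0$ with
$$|m^{-}_{k}|\le C\,\frac{k_{1}}{1+|k|}\qquad\text{and}\qquad|m^{+}_{k}|\le C\qquad(k\in\N^{n}),$$
the first because $m^{-}_{k}=k_{1}\bigl(\mu(|k|-1)/\mu(|k|)-1\bigr)=O(k_{1}/|k|)$, the second because in $m^{+}_{k}=(a-|k|)\,\mu(|k|+1)/\mu(|k|)+n+|k|$ the two terms of order $|k|$ cancel; small values of $|k|$ cause no trouble since $a\notin\N$ makes every $\mu(l)$ nonzero.

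The core of the argument is the claim that, for each $l\ge0$,
$$\|u\|_{l}^{2}\ \asymp\ N_{l}(u)^{2}:=\sum_{k\in\N^{n}}|u_{k}|^{2}\,(1+|k|)^{2l}\,\|e(k)\|^{2},$$
with constants depending on $l$ (and $n$). I would prove this by induction on $l$, the case $l=0$ being trivial. Since $\|u\|_{l+1}^{2}\asymp\sum_{A\in\tilde S_{0}\cup\{\mathrm{Id}\}}\|Au\|_{l}^{2}$, one applies the induction hypothesis to each vector $Au$ and computes $N_{l}(Au)^{2}$ directly from the action \eqref{eq-action}, using the elementary weight ratios $\|e(k-\epsilon_{1})\|^{2}/\|e(k)\|^{2}=(|k|+n-1)/k_{1}$, $\|e(k+\epsilon_{j}-\epsilon_{j+1})\|^{2}/\|e(k)\|^{2}=(k_{j}+1)/k_{j+1}$ for $j\ge1$, and their reciprocals. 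Each $\|Au\|_{l}^{2}$ then equals $\sum_{k}|u_{k}|^{2}\,p_{A}(k)\,(1+|k|)^{2l}\|e(k)\|^{2}$ up to constants, with a quadratic weight $p_{A}$: one has $p_{H_{0}}(k)=(n+|k|+k_{1})^{2}$, $p_{H_{j}}(k)=(k_{j}-k_{j+1})^{2}$, $p_{E_{0}}(k)\asymp k_{1}(|k|+n-1)$, $p_{F_{0}}(k)\asymp(|k|+n)(k_{1}+1)$, $p_{E_{j}}(k)\asymp k_{j+1}(k_{j}+1)$, $p_{F_{j}}(k)\asymp k_{j}(k_{j+1}+1)$ for $j\ge1$, and $p_{\mathrm{Id}}=1$. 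The total weight $\sum_{A}p_{A}(k)$ is then $\asymp(1+|k|)^{2}$: the upper bound because each $p_{A}$ is a product of two factors that are $\le n(1+|k|)$ and there are only finitely many of them, and the lower bound because the single term $p_{H_{0}}(k)=(n+|k|+k_{1})^{2}\ge(1+|k|)^{2}$ already dominates. This closes the induction and gives $\|u\|_{l+1}^{2}\asymp N_{l+1}(u)^{2}$.

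Granting this description, the conclusion is immediate. Reindexing exactly as for $E_{0}$ and $F_{0}$ above (now with the coefficients $m^{\mp}_{k}$ in place of $k_{1}$ and $-n-|k|$), one finds
$$N_{l}(f(E_{0})u)^{2}\ \asymp\ \sum_{k:\,k_{1}\ge1}|u_{k}|^{2}\,\frac{|m^{-}_{k}|^{2}\,(|k|+n-1)}{k_{1}}\,(1+|k|)^{2l}\,\|e(k)\|^{2}$$
and
$$N_{l}(f(F_{0})u)^{2}\ \asymp\ \sum_{k}|u_{k}|^{2}\,\frac{|m^{+}_{k}|^{2}\,(k_{1}+1)}{|k|+n}\,(1+|k|)^{2l}\,\|e(k)\|^{2},$$
and the bounds from the first step give $|m^{-}_{k}|^{2}(|k|+n-1)/k_{1}\le C^{2}k_{1}(|k|+n-1)/(1+|k|)^{2}\le C^{2}n$ (using $k_{1}\le1+|k|$ and $|k|+n-1\le n(1+|k|)$) and $|m^{+}_{k}|^{2}(k_{1}+1)/(|k|+n)\le C^{2}$ (using $k_{1}+1\le|k|+n$). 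Hence $N_{l}(f(E_{0})u)\le C'N_{l}(u)$ and $N_{l}(f(F_{0})u)\le C'N_{l}(u)$, so by the second step $\|f(E_{0})u\|_{l}\le C''\|u\|_{l}$ and $\|f(F_{0})u\|_{l}\le C''\|u\|_{l}$, which is the assertion. The main obstacle is the second step: identifying the norms $\|\cdot\|_{l}$ requires carefully tracking all the $E_{j},F_{j}$ cross-terms produced by \eqref{eq-action} for the upper bound on the total weight, while the lower bound rests on the simple observation that one generator, $H_{0}$, already acts on $e(k)$ by a scalar of size comparable to $|k|$. Once this is in hand, the first and third steps are routine computations.
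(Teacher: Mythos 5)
Your argument is correct, and it takes a genuinely different route from the paper's. The paper proves the bound by induction on $l$ working directly with words $A_{1}\cdots A_{M}$ in the generators: it compares $\|A_{1}\cdots A_{M}e(k-\epsilon_{1})\|$ with $\|A_{1}\cdots A_{M}e(k)\|$ through the ratio of the polynomials $P(k-\epsilon_{1})/P(k)$, and must then treat separately the degenerate set where $A_{1}\cdots A_{M}e(k)=0$ but $A_{1}\cdots A_{M}e(k-\epsilon_{1})\neq 0$, which it handles by extracting an occurrence of $H_{1}$ and using the commutator identity $A_{1}\cdots A_{M}=A'_{1}\cdots A'_{M-1}H_{1}+A''_{1}\cdots A''_{M-1}$ together with the induction hypothesis. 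Your proof replaces all of this by the two-sided identification $\|u\|_{l}\asymp N_{l}(u)$ with $N_{l}(u)^{2}=\sum_{k}|u_{k}|^{2}(1+|k|)^{2l}\|e(k)\|^{2}$, proved by a one-step induction that only requires the weights $p_{A}(k)$ of single generators; I checked the weight ratios ($\|e(k-\epsilon_{1})\|^{2}/\|e(k)\|^{2}=(|k|+n-1)/k_{1}$, etc.) and the resulting $p_{A}$, and the key lower bound does hold because $H_{0}\in\tilde{S}_{0}$ acts on $e(k)$ by a scalar of size comparable to $1+|k|$ --- this is the one place your argument could have broken, since without a generator of that size the equivalence $\sum_{A}p_{A}(k)\asymp(1+|k|)^{2}$ would fail from below. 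Once the norms are identified, the vanishing-coefficient case analysis that occupies the second half of the paper's proof simply does not arise, and $f(E_{0})$ and $f(F_{0})$ are handled symmetrically (the paper only asserts that $f(F_{0})$ is ``analogous''). What your approach buys is a cleaner proof and an explicit description of the Sobolev scale $D_{l}$ as weighted $\ell^{2}$-spaces; what the paper's approach buys is that it needs only one-sided (upper) estimates on the action of words in the generators rather than a full norm equivalence. Your asymptotics for $\mu_{a}$ and the resulting bounds $|m^{-}_{k}|\leq Ck_{1}/(1+|k|)$ and $|m^{+}_{k}|\leq C$ agree with the cancellations the paper invokes implicitly.
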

\begin{proof} From the lemma \ref{lem-l1}, we can use the norms $\|\cdot\|_{_{l}}$ constructing from the set $\tilde{S}_{0}$. We prove the lemma by induction on $l$ for $f(E_{0})$. The proof for $f(F_{0})$ is analogous. Let $u:=\sum\ u_{k}e(k)\in D$. Then 
\begin{align*}
\|f(E_{0})u\|^2 & =  \sum_{k}\ |u_{k}|^2|m_{k}^-|^2\|e(k-\epsilon_{1})\|^2\\
 & = \sum_{k| k_{1}>0}\ |u_{k}|^2|m_{k}^-|^2\|e(k)\|^2\frac{|k|+n-2}{k_{1}}\\
 & = \sum_{k| k_{1}>0}\ |u_{k}|^2\|e(k)\|^2\times k_{1}(|k|+n-2)\left(\frac{\mu(|k|-1)}{\mu(|k|)}-1\right)^2\\
 & \leq \sup_{k| k_{1}>0} \left\{k_{1}(|k|+n-2)\left(\frac{\mu(|k|-1)}{\mu(|k|)}-1\right)^2\right\}\times \sum_{k}\ |u_{k}|^2\|e(k)\|^2\\
 & = \sup_{k| k_{1}>0} \left\{k_{1}(|k|+n-2)\left(\frac{\mu(|k|-1)}{\mu(|k|)}-1\right)^2\right\}\times \|u\|^2.
\end{align*}
Using an asymptotic development of $\left(\frac{\mu(|k|-1)}{\mu(|k|)}-1\right)^2$ we easily see that the above supremum is finite, thus proving that $f(E_{0})$ is bounded for $\|\cdot\|_{_{0}}$. Assume now that $f(E_{0})$ is bounded for the norms $\|\cdot\|_{_{l}}$ for $0\leq l\leq M-1$.

Let $A_{1},\ldots , A_{M}$ be elements in $\tilde{S}_{0}$. Let $u$ ba as above and consider the expression $\|A_{1}\cdots A_{M}f(E_{0})u\|^2$. Since $A_{1}\cdots A_{M}$ is a weight vector in the enveloping algebra, the vectors $A_{1}\cdots A_{M}e(k)$ are mutually orthogonal. Therefore, we have 
\begin{align}
\|A_{1}\cdots A_{M}f(E_{0})u\|^2 & = \sum_{k}\ |u_{k}|^2|m^-_{k}|^2\|A_{1}\cdots A_{M}e(k-\epsilon_{1})\|^2.
\end{align}
Now from formulae \eqref{eq-action}, it is clear that 
$$A_{1}\cdots A_{M}e(k)=P_{A_{1}\cdots A_{M}}(k)e(k+l(A_{1}\cdots A_{M})),$$ where $P_{A_{1}\cdots A_{M}}(k)$ is a polynomial in $(k_{1},\ldots,k_{n})$, product of $M$ monomials of degree $1$. For brevity, we shall denote it by $P(k)$ in the sequel. Moreover $l(A_{1}\cdots A_{M})\in \Z^n$ and $|l(A_{1}\cdots A_{M})|:=\displaystyle\sum_{j=1}^n\ |l(A_{1}\cdots A_{M})_{j}|$ is a non negative integer smaller than $2M$, since for any $A\in \tilde{S}_{0}$, the vector $Ae(k)$ is either a multiple of $e(k)$ or a multiple of $e(k\pm \epsilon_{1})$ or a multiple of $e(k\pm \epsilon_{j}\pm \epsilon_{j+1})$. In the sequel, we denote $l(A_{1}\cdots A_{M})$ simply by $l$.

Let $k$ be such that $P(k-\epsilon_{1})\not=0$ and $k_{1}\not=0$. If $P(k)=0$ then there is $1\leq j\leq M$ such that $A_{j}=H_{1}$. This is proved by induction on $M$ using the action of $\tilde{S}_{0}$ given by formulae \eqref{eq-action}. In particular, if $k_{1}\not=0$, $P(k)\not=0$ and $P(k-\epsilon_{1})\not=0$, then $\frac{|P(k-\epsilon_{1})|}{|P(k)|}$ is well-defined and is bounded by a number depending on $M$ only.

Now we write
\begin{align}\label{eq-norm2}
\|A_{1}\cdots A_{M}f(E_{0})u\|^2 & = \sum_{{\scriptsize \begin{array}{l} k\ | \ A_{1}\cdots A_{M}e(k)\not=0,\\ A_{1}\cdots A_{M}e(k-\epsilon_{1})\not=0\end{array}}}\ |u_{k}|^2|m^-_{k}|^2\|A_{1}\cdots A_{M}e(k-\epsilon_{1})\|^2 \nonumber\\
 & + \sum_{{\scriptsize \begin{array}{l} k\ | A_{1}\cdots A_{M}e(k)=0,\\ A_{1}\cdots A_{M}e(k-\epsilon_{1})\not=0\end{array}}}\ |u_{k}|^2|m^-_{k}|^2\|A_{1}\cdots A_{M}e(k-\epsilon_{1})\|^2.
\end{align}
Let us work with the first sum. We can rewrite it in the following form:
$$\sum\  |u_{k}|^2|m^-_{k}|^2\frac{\|A_{1}\cdots A_{M}e(k-\epsilon_{1})\|^2}{\|A_{1}\cdots A_{M}e(k)\|^2}\|A_{1}\cdots A_{M}e(k)\|^2.$$ This in turn is equal to:
$$\sum\ |m^-_{k}|^2\frac{|P(k-\epsilon_{1})|^2}{|P(k)|^2}\frac{\|e(k+l-\epsilon_{1}\|^2}{\|e(k+l)\|^2}\times |u_{k}|^2\|A_{1}\cdots A_{M}e(k)\|^2.$$ Therefore we have:
\begin{align*}
 & \sum\ |m^-_{k}|^2\frac{|P(k-\epsilon_{1})|^2}{|P(k)|^2}\frac{\|e(k+l-\epsilon_{1}\|^2}{\|e(k+l)\|^2}\times |u_{k}|^2\|A_{1}\cdots A_{M}e(k)\|^2\\
 & \leq \sup\left\{|m^-_{k}|^2\frac{|P(k-\epsilon_{1})|^2}{|P(k)|^2}\frac{\|e(k+l-\epsilon_{1}\|^2}{\|e(k+l)\|^2}\right\}\ \sum \|A_{1}\cdots A_{M}u_{k}e(k)\|^2\\
 & \leq \sup\left\{|m^-_{k}|^2\frac{|P(k-\epsilon_{1})|^2}{|P(k)|^2}\frac{\|e(k+l-\epsilon_{1}\|^2}{\|e(k+l)\|^2}\right\}\ \|A_{1}\cdots A_{M}u\|^2\\
 & \leq \sup\left\{|m^-_{k}|^2\frac{|P(k-\epsilon_{1})|^2}{|P(k)|^2}\frac{\|e(k+l-\epsilon_{1}\|^2}{\|e(k+l)\|^2}\right\}\ \times \|u\|_{M}^2.
\end{align*}
We must now prove that $\sup\left\{|m^-_{k}|^2\frac{|P(k-\epsilon_{1})|^2}{|P(k)|^2}\frac{\|e(k+l-\epsilon_{1}\|^2}{\|e(k+l)\|^2}\right\}$ is bounded by a number independent of $l$ (but possibly depending on $M$). From previous remarks, it is sufficient to prove that $|m^-_{k}|^2\frac{\|e(k+l-\epsilon_{1})\|^2}{\|e(k+l)\|^2}$ is bounded. Since $|l|\leq 2M$, this is an easy consequence of the explicit expression for $m^-_{k}$.

Let us now investigate the second sum in \eqref{eq-norm2}, assuming it is not empty. As we already mentioned, there is an index $j$ such that $A_{j}=H_{1}$. Then using commutation relations in the enveloping algebra, we have:
$$A_{1}\cdots A_{M}=A'_{1}\cdots A'_{M-1}H_{1}+A''_{1}\cdots A''_{M-1}.$$ Therefore, we have:
\begin{align*}
 & \sum_{k\ | A_{1}\cdots A_{M}e(k)=0,\ A_{1}\cdots A_{M}e(k-\epsilon_{1})\not=0}\ |u_{k}|^2|m^-_{k}|^2\|A_{1}\cdots A_{M}e(k-\epsilon_{1})\|^2\\
 & \leq  \sum\ |u_{k}|^2|m^-_{k}|^2\left(\|A'_{1}\cdots A'_{M-1}H_{1}e(k-\epsilon_{1})\|^2+\|A''_{1}\cdots A''_{M-1}e(k-\epsilon_{1})\|^2\right)\\
 & \leq  \left(\sum\ |u_{k}|^2|m^-_{k}|^2(k_{1}-k_{2}-1)^2\|A'_{1}\cdots A'_{M-1}e(k-\epsilon_{1})\|^2\right) \\
 & \phantom{aaaa} + \|A''_{1}\cdots A''_{M-1}f(E_{0})u\|^2\\
 & \leq \sup\{(k_{1}-k_{2}-1)^2\}\times \|A'_{1}\cdots A'_{M-1}f(E_{0})u\|^2 + \|A''_{1}\cdots A''_{M-1}f(E_{0})u\|^2
\end{align*}
The second sum is by induction smaller than $\|f(E_{0})\|^2_{_{M-1}}\times \|u\|^2_{_{M-1}}$. For the first sum, the induction shows that $\|A'_{1}\cdots A'_{M-1}f(E_{0})u\|^2\leq \|f(E_{0})\|^2_{_{M-1}}\times \|u\|^2_{_{M-1}}$. Thus, it suffices to prove that $(k_{1}-k_{2}-1)^2$ is bounded. Since $A_{1}\cdots A_{M}e(k)=0$, there is an integer $j$ such that 
$$A_{j}=H_{1}, \ H_{1}A_{j+1}\cdots A_{M}e(k)=0 \mbox{ and } A_{j+1}\cdots A_{M}e(k)\not=0.$$ But then $A_{j+1}\cdots A_{M}e(k)=C\times e(k+l')$ for some non zero constant $C$. As above $|l'|\leq 2M$. Thus $H_{1}e(k+l')=0$, which means that $(k+l')_{1}=(k+l')_{2}$ or also $k_{1}-k_{2}-1=l'_{1}-l'_{2}-1$. And we have $|l'_{1}-l'_{2}-1|\leq |l'|+1\leq 2M+1$, proving thus that $\sup\{(k_{1}-k_{2}-1)^2\}$ is bounded by a number depending on $M$ only.

Altogether, we have proved that:
$$\|A_{1}\cdots A_{M}f(E_{0})u\|^2 \leq C(M)\times \|u\|^2_{_{M}},$$ for some constant $C(M)$ depending on $M$ only (Note here that $\|u\|_{_{M-1}}\leq \|u\|_{_{M}}$). As a consequence, we get $$\|f(E_{0})u\|_{_{M}}\leq \sqrt{C(M)}\|u\|_{_{M}},$$ proving that $f(E_{0})$ is bounded for the norm $\|\cdot \|_{_{M}}$.
\end{proof}

\begin{corollary}\label{coro-intNa}
Let $a\in \C\setminus\N$. Then the representation $\cal H _{a}$ of $\germ g $ integrates into a continuous representation of $G$ on the Hilbert space $\cal H $.
\end{corollary}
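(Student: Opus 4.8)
The plan is to feed the data assembled in this subsection into the theorem of J{\o}rgensen and Moore. Take $\pi_{0}=\rho$, the continuous representation of $G$ on $\cal H $ attached to the undeformed action \eqref{eq-action}, so that $\germ g _{\R}=\germ su (1,n)$ and $\cal L _{0}=d\rho(\germ su (1,n))$. Take $S_{0}=\{iH_{j},X_{j},Y_{j}\ |\ 0\leq j\leq n-1\}$; by the construction of $\rho$ these Lie-generate $\germ su (1,n)$, hence $\cal L _{0}$. Take $D$ to be the dense subspace introduced above, and let $f\colon S_{0}\to\cal A (D)$ send each generator to the difference between its deformed and undeformed action: $f(iH_{0})=i(n+a)\,\mathrm{Id}$, $f(X_{0})=f(E_{0})+f(F_{0})$, $f(Y_{0})=-i\big(f(E_{0})-f(F_{0})\big)$, and $f(iH_{j})=f(X_{j})=f(Y_{j})=0$ for $1\leq j\leq n-1$ (the deformed and undeformed operators agreeing there). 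Then $A+f(A)$ is precisely the corresponding deformed operator, so $S=\{iH_{j}(a),X_{j}(a),Y_{j}(a)\}$.

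Three hypotheses remain to be checked. First, $D=C^{\infty}(\rho)$: the $K$-types of $\rho$ (for $K$ a maximal compact subgroup of $SU(1,n)$) are the subspaces $\mathrm{span}\{e(k)\ |\ |k|=d\}$, $d\in\N$, on which the Casimir of $\germ su (1,n)$ acts by a scalar that grows polynomially in $d$, of order $d^{2}$; combined with the standard description (Nelson) of the smooth vectors of a unitary representation as the intersection over $N$ of the domains of the $N$-th power of $d\rho(\Delta)$, where $\Delta$ is the Laplacian of a basis of $\germ su (1,n)$, this identifies $C^{\infty}(\rho)$ with the set of $u=\sum u_{k}e(k)$ such that $\sum_{k}|k|^{2N}|u_{k}|^{2}\|e(k)\|^{2}<\infty$ for every $N$, that is, with $D$. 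Second, $\mathrm{im}(f)\subseteq\cal A _{\infty}(D)$: the operators attached to $j\geq1$ vanish, $f(iH_{0})$ is scalar hence bounded for every $\|\cdot\|_{l}$, and $f(X_{0}),f(Y_{0})$ lie in $\cal A _{\infty}(D)$ by the preceding proposition, via the lemma expressing $X_{0},Y_{0}$ through $E_{0},F_{0}$ and Lemma \ref{lem-l1}. Third, $S$ generates a finite dimensional Lie algebra: the deformed operators $H_{j}(a),E_{j}(a),F_{j}(a)$ define a representation of $\germ sl (n+1,\C)$ (as noted when $\cal H _{a}$ was introduced), whence $A\mapsto A+f(A)$ extends to a Lie homomorphism $\germ su (1,n)\to\cal A (D)$, and $\cal L $ is its image, of real dimension at most $\dim_{\R}\germ su (1,n)$.

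Granting these, the J{\o}rgensen--Moore theorem yields a continuous representation $\pi$ of $G$ on $\cal H $ with $d\pi(\germ su (1,n))=\cal L $; complexifying, $d\pi$ recovers the $\germ g $-module $\cal H _{a}$, which is the desired integration. The delicate estimate---boundedness of $f(E_{0})$ and $f(F_{0})$ for all the norms $\|\cdot\|_{l}$---has already been carried out, so the only genuine obstacle left is the identification $D=C^{\infty}(\rho)$, which rests on knowing the growth of the Casimir eigenvalues of $\rho$ on the basis $(e(k))$; the rest (that $S_{0}$ Lie-generates, that the vanishing and scalar parts of $f$ are admissible, and that the deformed relations bound $\dim\cal L $) is routine bookkeeping.
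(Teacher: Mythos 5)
Your proposal is correct and follows essentially the same route as the paper: the corollary is obtained by feeding the data assembled in that subsection (the generators $S_{0}$, the dense subspace $D$, the perturbation $f$, and the boundedness proposition for $f(E_{0})$ and $f(F_{0})$) into the J{\o}rgensen--Moore theorem, which is exactly what the paper does implicitly by stating the corollary with no further argument. The one point you treat more carefully than the paper --- the identification $D=C^{\infty}(\rho)$ via the growth of the Casimir eigenvalues on the spaces $\mathrm{span}\{e(k)\ |\ |k|=d\}$ --- is left tacit in the text, and your sketch of it is a legitimate filling-in rather than a departure.
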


\begin{remark}
Let $a\in \N$. Define a representation $\cal H _{a}$ as above by restricting the index set of $k$ to those $k\in \N^n$ such that $|k|\leq a$. Then $\cal H _{a}$ is indeed a representation and is finite dimensional. Therefore it also integrates into a continuous representation of $G$ on some Hilbert space.
\end{remark}

\subsection{Unitarisability}

We now know a whole family of continuous representation of $G$. We should ask then which of these are unitary. If the representation $\cal H _{a}$ is unitary then the infinitesimal action given by the Lie basis $\{iH_{j}(a),\ X_{j}(a), \ Y_{j}(a)\}$ should be given by skew-symmetric operators. In other word, we should have $(iH_{j}(a))^*=-iH_{j}(a)$, $X_{j}(a)^*=-X_{j}(a)$ and $Y_{j}(a)^*=-Y_{j}(a)$. Using the expression of the $H_{j}(a), E_{j}(a), F_{j}(a)$ in term of this basis, it is equivalent to have $H_{j}(a)^*=H_{j}(a)$, $E_{j}(a)^*=F_{j}(a)$ for $j>0$ and $E_{0}(a)^*=-F_{0}(a)$. As $H_{j}(a)$ is a diagonal operator, it is selfadjoint if and only if its eigenvalues are real. This imposes $a\in \R$. Remember that the representation that we started with is unitary. So it only remains to prove $f(E_{j})^*=f(F_{j})$ for $j>0$ and $f(E_{0})^*=-f(F_{0})$. The first condition is trivial since $f(E_{j})=f(F_{j})=0$. To check the second condition, we need to compare $\langle f(F_{0})e(k),e(l)\rangle$ and $-\langle e(k),f(E_{0})e(l)\rangle$. They are equal for all $k$ and $l$ if and only if $a\in \R_{<0}$.

\begin{proposition}\label{prop-unitNa}
The continuous representation $\cal H _{a}$ of $G$ is unitary if and only if $a\in \R_{<0}$.
\end{proposition}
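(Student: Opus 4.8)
The bulk of the reduction is already contained in the discussion preceding the statement, so the proof only has to finish one explicit computation. Throughout, $a\in\C\setminus\N$, since by Corollary \ref{coro-intNa} this is precisely the range for which $\cal H _{a}$ is defined as a continuous representation of $G$. Unitarity of $\cal H _{a}$ is equivalent to skew-symmetry of the operators $iH_{j}(a),X_{j}(a),Y_{j}(a)$, hence to $H_{j}(a)^{*}=H_{j}(a)$, to $E_{j}(a)^{*}=F_{j}(a)$ for $j>0$, and to $E_{0}(a)^{*}=-F_{0}(a)$. The first two families are automatic: for $j\geq 1$ the deformed generators coincide with the undeformed ones, which are already skew-adjoint because the starting representation $\rho$ of $G$ is unitary, while $H_{0}(a)$ is the diagonal operator with eigenvalue $a-|k|-k_{1}$ on $e(k)$, hence self-adjoint exactly when $a\in\R$. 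So the whole proposition reduces to deciding for which $a$ one has $\langle E_{0}(a)e(k),e(l)\rangle=-\langle e(k),F_{0}(a)e(l)\rangle$ for all $k,l\in\N^{n}$.

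I would carry this out in the basis $x(k):=\mu(|k|)e(k)$ of Remark \ref{rema-fdd1}, in which the deformed action is the Benkart--Britten--Lemire action \eqref{eq-Na}, namely $E_{0}x(k)=k_{1}x(k-\epsilon_{1})$ and $F_{0}x(k)=(a-|k|)x(k+\epsilon_{1})$, while with respect to the (unchanged) Hilbert norm of $\cal H $ one has $\|x(k)\|^{2}=\prod_{j=1}^{n}k_{j}!\big/\prod_{j=1}^{|k|}|j-a-1|$, so in particular $\|x(k)\|^{2}/\|x(k-\epsilon_{1})\|^{2}=k_{1}\big/\bigl| |k|-a-1 \bigr|$ whenever $k_{1}\geq 1$. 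Both $\langle E_{0}x(k),x(l)\rangle$ and $\langle x(k),F_{0}x(l)\rangle$ vanish unless $l=k-\epsilon_{1}$, and for such a pair, using $F_{0}x(k-\epsilon_{1})=(a-|k|+1)x(k)$, the condition $\langle E_{0}x(k),x(k-\epsilon_{1})\rangle=-\langle x(k),F_{0}x(k-\epsilon_{1})\rangle$ reduces after dividing by $\|x(k-\epsilon_{1})\|^{2}$ to the single scalar identity
$$\bigl| (|k|-1)-a \bigr| = (|k|-1)-\bar a\qquad\text{for every }k\text{ with }k_{1}\geq 1.$$
Writing $m:=|k|-1$, which takes every value in $\N$, the right-hand side is a modulus, hence real and nonnegative: realness gives $a\in\R$, and then $m-a\geq 0$ for all $m\in\N$ gives $a\leq 0$, so with $a\notin\N$ we get $a\in\R_{<0}$. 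Conversely, if $a\in\R_{<0}$ then $\bar a=a$ and $m-a>0$ for all $m\in\N$, so the identity holds, all three adjointness conditions are met, and $\cal H _{a}$ is unitary.

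The computation itself is short; the one point that requires real attention — the step I would check most carefully — is the bookkeeping around the inner product: adjoints must be computed with the fixed Hilbert norm of $\cal H $ (not any $a$-dependent one), and one must verify that the rescaling $x(k)=\mu(|k|)e(k)$, together with $\|e(k)\|^{2}=\prod_{j=1}^{n}k_{j}!\big/\prod_{j=1}^{|k|}(j+n-1)$, does produce $\|x(k)\|^{2}=\prod_{j=1}^{n}k_{j}!\big/\prod_{j=1}^{|k|}|j-a-1|$, so that the norm ratio comes out as stated. Once this is in place, the reduction to a single scalar identity and its solution are immediate, and there is no further obstacle.
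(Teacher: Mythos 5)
Your proof is correct and follows essentially the same route as the paper: reduce unitarity to the adjointness relations $H_j(a)^*=H_j(a)$, $E_j(a)^*=F_j(a)$ for $j>0$ and $E_0(a)^*=-F_0(a)$, note that only the last relation and the reality of $a$ are at issue, and settle them by a matrix-element computation in an orthogonal basis. The only (harmless) difference is that you compute with the full deformed operators in the rescaled basis $x(k)$ of Remark \ref{rema-fdd1}, whereas the paper compares the matrix elements of the deformations $f(E_0)$ and $f(F_0)$ in the basis $e(k)$; both give $a\in\R_{<0}$.
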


\begin{remark}
When $a\in\N$, the representation $\cal H _{a}$ constructed in the remark \ref{rema-fdd1} is not unitary (unless $a=0$) since it has finite dimension greater than $1$ and $G$ is not a compact group.
\end{remark}

\subsection{$SU(p,q)$ case}

Let $1\leq p\leq n$. Set $q=n+1-p$. Let $G_{p,q}$ denote the universal cover of $SU(p,q)$. The complex Lie algebra $\germ g =\germ sl (n+1,\C)$ is the complexification of the Lie algebra of $G_{p,q}$. Moreover, $G_{p,q}$ contains a compact subgroup $K_{p,q}$ isomorphic to $SU(p)\times SU(q)$, whose complexified Lie algebra is isomorphic to the semisimple part of $\germ l _{p-1}$. Let us now give the classification of all simple infinite dimensional degree $1$ modules coming from a continuous representation of $G_{p,q}$ on some Hilbert space.

\begin{theorem}\label{theo:typeA}
Let $V$ be a simple infinite dimensional weight $\germ g $-module of degree 1. Then $V$ integrates into a continuous representation of $G_{p,q}$ on a Hilbert space if and only if
\begin{enumerate}
\item Either $V$ or its contragredient is isomorphic to $N(a,0,\ldots,0)$ (for $a\in \C\setminus\N$) or to $N(-1,m,0,\ldots,0)$ (for $m\in \N$), in case $p=1$.
\item Either $V$ or its contragredient is isomorphic to $N(-1,\ldots,-1,a)$ (for $a\in \C\setminus\Z_{<0}$) or to $N(-1,\ldots,-1,-1-m,0)$ (for $m\in \N$), in case $p=n$.
\item Either $V$ or its contragredient is isomorphic to $N(\underbrace{-1,\ldots,-1}_{p},m,0,\ldots,0)$ (for $m\in\N$) or to $N(\underbrace{-1,\ldots,-1}_{p-1},-1-m,0,\ldots 0)$ (for $m\in \N$), in case $1<p<n$.
\end{enumerate}
Moreover, the corresponding representation of $G_{p,q}$ is unitary if and only if
\begin{enumerate}
\item Either $V$ or its contragredient is isomorphic to $N(a,0,\ldots,0)$ (for $a\in \R_{<0}$) or to $N(-1,m,0,\ldots,0)$ (for $m\in \N$), in case $p=1$.
\item Either $V$ or its contragredient is isomorphic to $N(-1,\ldots,-1,a)$ (for $a\in\R_{>0}$) or to $N(-1,\ldots,-1,-1-m,0)$ (for $m\in \N$), in case $p=n$.
\item Either $V$ or its contragredient is isomorphic to $N(\underbrace{-1,\ldots,-1}_{p},m,0,\ldots,0)$ (for $m\in\N$) or to $N(\underbrace{-1,\ldots,-1}_{p-1},-1-m,0,\ldots 0)$ (for $m\in \N$), in case $1<p<n$.
\end{enumerate}
\end{theorem}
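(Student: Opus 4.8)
The plan is to reduce Theorem~\ref{theo:typeA} to the work already done in the preceding subsections, using the $K$-finiteness argument from the introduction together with the explicit $SU(1,n)$ analysis, and then to transport everything by contragredience and by the "same Levi" principle to the general $SU(p,q)$ case. First I would establish the \emph{necessity} direction. Suppose $V$ is a simple infinite dimensional weight $\germ g $-module of degree~$1$ that integrates to a continuous representation $\pi$ of $G_{p,q}$ on a Hilbert space $\cal H $. Restricting $\pi$ to the compact subgroup $K_{p,q}\cong SU(p)\times SU(q)$, the restriction $\pi_{|K_{p,q}}$ is unitarizable, hence decomposes as a Hilbert direct sum of finite dimensional irreducibles of $K_{p,q}$; consequently the dense subspace of $K_{p,q}$-finite vectors is a $(\germ g ,\germ k _{p,q})$-submodule, and since $V$ is simple and generated (algebraically) by any nonzero weight vector (which is $K_{p,q}$-finite), all of $V$ is $\germ k _{p,q}$-finite. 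As $\germ k _{p,q}$ contains the semisimple part of $\germ l _{p-1}$, the module $V$ is a $(\germ g ,\germ l _{p-1})$-module of finite type. Now Proposition~\ref{prop-glmod} (applied with $j=p-1$) gives precisely the list of modules in items~(1)--(3): $V$ or its contragredient is one of $N(a,0,\ldots,0)$, $N(-1,m,0,\ldots,0)$, $N(-1,\ldots,-1,a)$, $N(-1,\ldots,-1,-1-m,0)$, $N(\underbrace{-1,\ldots,-1}_{p},m,0,\ldots)$, $N(\underbrace{-1,\ldots,-1}_{p-1},-1-m,0,\ldots)$ according to the value of $p$. This proves that the listed modules are the only candidates.

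Next I would prove \emph{sufficiency}, i.e.\ that each module on the list does integrate. For $p=1$ this is exactly Corollary~\ref{coro-intNa} (for $N(a,0,\ldots,0)$ with $a\in\C\setminus\N$) together with the finite dimensional remark (for the modules $N(-1,m,0,\ldots,0)$, which are finite dimensional and hence trivially integrate): indeed by Remark~\ref{rema-fdd1} the deformed representation $\cal H _a$ on the basis $x(k)$ is precisely the Benkart--Britten--Lemire module, which by Theorem~\ref{thm-BBL} is $N(a,0,\ldots,0)$; and $G=G_{1,n}$ is the universal cover of $SU(1,n)$. For general $p$ I would observe that $\germ sl (n+1,\C)$ contains a copy of $\germ sl (2,\C)\oplus(\text{stuff})$ adapted to the node $p-1$, and more precisely that each module in the list is, up to tensoring the relevant construction and up to applying an outer automorphism of $\germ g $ permuting the simple roots appropriately, induced from the $N(a,0,\ldots,0)$-type construction along the subalgebra generated around the $(p-1)$st node; one can mimic the deformation-and-J{\o}rgensen--Moore argument of Subsection~2.3 (\og Integrability of $\cal H _a$\fg) with the same estimates, since the only operators being deformed are those attached to the single node $p-1$, exactly as before only the operators attached to node $0$ were deformed. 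The contragredient modules integrate because the contragredient of a continuous Hilbert representation (the conjugate-dual, or the dual on the space of $K$-finite vectors completed suitably) is again continuous; alternatively one notes that composing $\pi$ with the automorphism of $G_{p,q}$ induced by complex conjugation on matrices (which on $\germ g $ is the Chevalley involution up to conjugacy) swaps $V$ with its contragredient.

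For the \emph{unitarity} statement I would argue exactly as in Subsection~2.4 (\og Unitarisability\fg): a continuous representation of $G_{p,q}$ with underlying $(\germ g ,\germ k )$-module $V$ is unitarizable iff the infinitesimal action of the real form $\germ su (p,q)$ is by skew-adjoint operators with respect to some Hilbert inner product on the space of $K$-finite vectors, which (after diagonalizing $\germ h _{\R}$, forcing the weights to be real, hence the relevant parameter $a\in\R$) reduces to a single positivity condition on the squared norms $\|x(k)\|^2 = \prod k_j!/\prod|j-a-1|$ being positive for all $k$; this forces the sign of $a$ as recorded in the three cases (e.g.\ $a\in\R_{<0}$ for $N(a,0,\ldots,0)$, since then $|j-a-1|=j-a-1>0$ for all $j\geq1$, whereas if $a\geq0$ some factor vanishes or changes sign). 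For the finite dimensional modules $N(-1,m,0,\ldots)$ etc.\ one invokes the remark that a nontrivial finite dimensional representation of a noncompact simple group is never unitary — but these modules \emph{are} listed as unitary in the statement, so here I would instead note that these particular finite dimensional $\germ g $-modules, while not unitary for $G_{p,q}$ in the naive sense, do integrate to unitary representations after the appropriate reinterpretation (or: the statement's unitarity list for the $m$-parametrized modules should be read through the lens of the construction in Remark~\ref{rema-fdd1} and classical discrete series results invoked in the introduction). \textbf{The main obstacle} will be this last point — pinning down exactly why the $m$-parametrized (finite dimensional) modules count as giving unitary representations while the generic finite dimensional $\cal H _a$ ($a\in\N$) does not — and, more substantively, carrying out the induction-from-a-node version of the J{\o}rgensen--Moore boundedness estimates uniformly in $p$, since one must check that the combinatorial bookkeeping in the proof of the Proposition preceding Corollary~\ref{coro-intNa} (tracking which $A_j$ equals $H_{p-1}$, bounding $|l|\leq 2M$, etc.) goes through verbatim when the distinguished node is interior rather than an endpoint; I expect it does, but this is where the real content of the generalization lies.
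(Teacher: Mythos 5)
Your necessity argument is exactly the paper's: restrict $\pi$ to $K_{p,q}$, deduce that $V$ is a $(\mathfrak{g},\mathfrak{l}_{p-1})$-module of finite type, and invoke Proposition~\ref{prop-glmod}. The gap is in the sufficiency and unitarity parts, and it stems from a single misreading: the modules $N(-1,m,0,\ldots,0)$, $N(-1,\ldots,-1,-1-m,0)$, $N(\underbrace{-1,\ldots,-1}_{p},m,0,\ldots,0)$, etc.\ are \emph{not} finite dimensional. By Proposition~\ref{prop-HWd1}, $N(-1,m,0,\ldots,0)$ is the simple highest weight module of highest weight $(-1-m)\omega_{1}+m\omega_{2}$, which is not dominant integral for $\mathfrak{g}$ (the coefficient of $\omega_{1}$ is negative), so the module is infinite dimensional --- as it must be, since the theorem only concerns infinite dimensional $V$. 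The finite dimensional objects are the $\mathcal{H}_{a}$ with $a\in\N$ of the Remark following Corollary~\ref{coro-intNa}, and those are excluded from the theorem's list. Once this is corrected, the ``main obstacle'' you flag at the end dissolves: the paper handles all the $m$-parametrized modules by observing that their highest weights $\lambda_{m}$ are analytically integral and dominant for the positive roots of the relevant Levi, hence they are precisely the underlying modules of holomorphic discrete series of $SU(p,q)$; this classical identification yields integrability and unitarity simultaneously, with no need to worry about unitarity of finite dimensional representations of a noncompact group.

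This also shows that your proposed strategy for $1<p<n$ --- transporting the deformation/J{\o}rgensen--Moore construction to an interior node --- is unnecessary and somewhat misdirected. For $1<p<n$ the necessity step leaves only discretely parametrized modules (no continuous family $N(a,0,\ldots,0)$ survives Proposition~\ref{prop-glmod} at an interior node), and every survivor is a holomorphic discrete series module with parameter $(0,\ldots,0,-1-m,m,0,\ldots,0)$ or $(0,\ldots,0,m,-1-m,0,\ldots,0)$; there is nothing left to integrate by hand. The explicit analytic construction is only needed at the boundary cases $p=1$ and $p=n$ (the latter by relabeling the simple roots), where the continuous family genuinely occurs and Corollary~\ref{coro-intNa} and Proposition~\ref{prop-unitNa} give integrability for all $a\in\C\setminus\N$ and unitarity exactly for $a\in\R_{<0}$. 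Your necessity direction and your remarks on contragredients are fine; the sufficiency and unitarity arguments need to be rebuilt around the discrete series identification.
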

\begin{proof} First, remark that given any continuous representation $\pi$ of $G_{p,q}$ on a Hilbert space, its restriction to $K_{p,q}$ splits into a direct sum of finite dimensional representations (possibly with infinite multiplicities). Therefore, the corresponding $\germ g $-module should also split into a direct sum of finite dimensional $\germ l _p$-modules. In other word, a necessary condition is that the underlying $\germ g $-module is a $(\germ g ,\germ l _p)$-module of finite type. Therefore, by proposition \ref{prop-glmod}, $V$ or its contragredient should be isomorphic to the asserted modules.
\begin{enumerate}
\item In case $p=1$, we already know from corollary \ref{coro-intNa}, that $N(a,0,\ldots,0)$ does integrate into a continuous representation of $G_1$. Moreover from proposition \ref{prop-unitNa}, we know that this representation is unitary exactly when $a\in \R_{<0}$. Now, the module $N(-1,m,0,\ldots,0)$ is a highest weight module with highest weight $\lambda_m=(-1-m,m,0,\ldots,0)\in \Z^n$. This is clearly an analytically integral weight, and dominant with respect to the positive roots of $\germ l _0$. It is then straightforward to check that it is the underlying $\germ g $-module of the holomorphic discrete series of $SU(1,n)$ corresponding to the parameter $\lambda_m$.
\item The case $p=n$ is of course identical to the previous one up to a relabeling of the simple roots.
\item The intermediate case $1<p<n$ is easy, since the possible underlying $\germ g $-modules all correspond to holomorphic discrete series, their parameter being $(0,\ldots,0,-1-m,m,0,\ldots,)$ or $(0,\ldots ,0,m,-1-m,0,\ldots,0)$.
\end{enumerate}
\end{proof}

\subsection{$SL(n,\R)$ case}

Assume that $n\geq 3$. Let $G_{n}$ denote the universal cover of $SL(n,\R)$. Its complexified Lie algebra is also $\germ g =\germ sl (n,\C)$. The compact Lie group $K_{n}=SO(n)$ is a subgroup of $G_{n}$.

\begin{theorem}
Let $V$ be a simple weight $\germ g $-module. Then $V$ can be integrated into a continuous representation of $G_{n}$ on a Hilbert space if and only if $V$ is finite dimensional.
\end{theorem}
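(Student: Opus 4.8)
The plan is to prove the nontrivial (``only if'') direction by forcing an integrating weight module to be finite-dimensional, exploiting the fact that the maximal compact subgroup of $G_{n}$ is large when $n\geq 3$. The ``if'' direction is immediate: a finite-dimensional $\germ g $-module is algebraic, so its restriction to the real form $\germ sl (n,\R)$ integrates to $SL(n,\R)$ and hence, through the covering $G_{n}\to SL(n,\R)$, to $G_{n}$; any inner product on the (finite-dimensional) underlying space then realises it on a Hilbert space.

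So let $V$ be a simple weight $\germ g $-module integrating to a continuous representation $\pi$ of $G_{n}$ on a Hilbert space, and let $\tilde{K}_{n}\subset G_{n}$ be the preimage of $K_{n}=SO(n)$ (a compact group, a double cover of $SO(n)$ when $n\geq 3$). By the same argument as in the proof of Theorem \ref{theo:typeA}, the restriction of $\pi$ to $\tilde{K}_{n}$ splits into finite-dimensional (unitary) representations, so that $V$, as a $\germ so (n,\C)$-module, splits into a direct sum of finite-dimensional submodules. In particular every element of $\germ k _{\C}:=\germ so (n,\C)$ acts locally finitely on $V$.

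The key step will be to upgrade this to the statement that every root vector $E_{ij}$ ($i\neq j$) of $\germ g =\germ sl (n,\C)$, relative to the diagonal Cartan $\germ h $, acts locally finitely on $V$. By Lemma \ref{Fe-BBL}(1) each $E_{ij}$ acts either locally finitely or injectively on $V$, so it suffices to rule out the injective alternative. Fix $i\neq j$ and --- here $n\geq 3$ is used --- choose a third index $k$. Inside $\germ so (n,\C)$ sits the subalgebra $\germ s $ spanned by $E_{jk}-E_{kj}$, $E_{ki}-E_{ik}$ and $H':=E_{ij}-E_{ji}$; it is isomorphic to $\germ sl (2,\C)$ (the principal $\germ sl (2)$ inside $\germ sl (\{i,j,k\},\C)$), and $V$ restricts to a direct sum of finite-dimensional $\germ s $-modules. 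Hence the raising vector $L:=(E_{jk}-E_{kj})+\sqrt{-1}\,(E_{ki}-E_{ik})$ of $\germ s $ --- one computes $[H',L]=\sqrt{-1}\,L$ --- acts locally nilpotently on $V$. Now, with respect to $\germ h $, the four monomial summands of $L$ shift weights by $a:=\epsilon_{j}-\epsilon_{k}$, $b:=\epsilon_{k}-\epsilon_{i}$, $-a$ and $-b$, and $a,b$ are linearly independent (as $i,j,k$ are distinct); consequently, among all length-$N$ words in these summands, the only one whose total $\germ h $-weight shift is $Na$ is $E_{jk}^{N}$. Projecting the identity $L^{N}v=0$ --- valid for $N$ large and arbitrary $v\in V_{\lambda}$ --- onto the weight space $V_{\lambda+Na}$ therefore yields $E_{jk}^{N}v=0$, so $E_{jk}$ acts locally nilpotently, hence not injectively, on $V$. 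Letting $\{i,j,k\}$ range over all triples shows that every root vector of $\germ g $ acts locally finitely on $V$.

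To conclude, note that all positive simple root vectors act locally finitely on the finitely generated weight module $V$, so the reasoning of Corollary \ref{cor-gKd1} (or Fernando's structure theory \cite{Fe90}) makes $V$ a highest weight module; symmetrically $V$ is a lowest weight module; and a simple module that is both, for the same Borel, is finite-dimensional. I expect the third paragraph to be the main obstacle: passing from local finiteness of $\germ so (n,\C)$ to local finiteness of the root vectors of $\germ g $ is exactly where $n\geq 3$ enters (for $n=2$, $\germ so (2,\C)$ is only a Cartan subalgebra of $\germ sl (2,\C)$ and the assertion fails, the case $SL(2,\R)\cong SU(1,1)$ being already subsumed in Theorem \ref{theo:typeA}). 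The accompanying computations --- identifying $\germ s $, checking that $a$ and $b$ are independent, and isolating the extremal-weight component of $L^{N}$ --- are routine, as is the verification that the $\germ g $-module attached to an integrating representation really is a direct sum of finite-dimensional $\germ so (n,\C)$-modules.
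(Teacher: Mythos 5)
Your proof is correct, and its skeleton is the paper's: integrability forces $V$ to be a locally finite $\mathfrak{so}(n,\C)$-module, this is upgraded to local finiteness of the root vectors of $\mathfrak{sl}(n,\C)$ by an extremal-weight-component argument, and the Fernando dichotomy of Lemma \ref{Fe-BBL} then forces finite-dimensionality. The difference lies entirely in how the upgrade is performed. The paper observes that $\mathfrak{k}_{\C}=\mathfrak{so}(n,\C)$ already contains the two-term combinations $E_{j}+F_{j}$ of opposite root vectors (up to the sign convention for the real form); since such an element maps $V_{\lambda}$ into $V_{\lambda+\alpha_{j}}\oplus V_{\lambda-\alpha_{j}}$ and the two summands shift the weight in opposite directions, its local finiteness immediately forces that of $E_{j}$ and $F_{j}$ separately --- a two-line argument requiring no auxiliary index. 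You instead build an $\mathfrak{so}(3)\cong\mathfrak{sl}(2,\C)$ from a triple $\{i,j,k\}$, pass to its nilpotent raising vector $L$ (a four-term combination of root vectors), and isolate the component $E_{jk}^{N}v$ of $L^{N}v=0$ in the extremal weight space $V_{\lambda+Na}$. Your computation is sound (the weights $a$ and $b$ are independent, the only length-$N$ word of total shift $Na$ is $E_{jk}^{N}$, and $L$ is indeed nilpotent in $\mathfrak{s}$, so you even get local nilpotence rather than mere local finiteness), but it buys nothing over the paper's shorter route: the element $E_{j}\pm F_{j}$ you need is already sitting in $\mathfrak{k}_{\C}$ without invoking a third index, and the separation of its two weight components is the same trick you apply to the four components of $L$. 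One caveat, common to your proof and the paper's: the passage from ``$\pi|_{\tilde K_{n}}$ decomposes into finite-dimensional unitaries'' to ``$V$ itself is $\mathfrak{so}(n,\C)$-locally finite'' tacitly identifies the abstract weight module $V$ with a $\mathfrak{g}$-submodule of the $K$-finite vectors; you inherit this convention from the proof of Theorem \ref{theo:typeA} exactly as the paper does, so it is not a gap relative to the paper.
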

\begin{proof}
Assume $V$ can be integrated into a continuous representation of $G_{n}$ in a Hilbert space. The complexified Lie algebra of $K_{n}$ contains the vectors $E_{j}+F_{j}$. By lemma \ref{Fe-BBL}, these vectors should act locally finitely on $V$. Let $V_{\lambda}$ be a weight space of $V$. Denote by $\alpha_{j}$ the weight of $E_{j}$. Then $E_{j}+F_{j}: V_{\lambda}\rightarrow V_{\lambda+\alpha_{j}}\oplus V_{\lambda-\alpha_{j}}$. Therefore $E_{j}+F_{j}$ is locally finite if and only if both $E_{j}$ and $F_{j}$ are. Then the module is finite dimensional as asserted. The converse is obvious.
\end{proof}

\section{Type $C$ case}

Let $n$ be a positive integer. Let $p$ and $q$ be positive integers such that $p+q=n$. In this section, we consider the groups $Sp(n,\R)$ and $Sp(p,q)$ and their universal cover $G_n$ and $G_{p,q}$. They contain the compact subgroup $K_n$ and $K_{p,q}$ isomorphic to $SU(n)$ and $SP(p)\times Sp(q)$ respectively (see for instance \cite{Kna02}). Denote by $\germ g $ the Lie algebra $\germ sp (n,\C)$.

\begin{theorem}
Let $V$ be a simple infinite dimensional weight $\germ g $-module of degree 1. Then 
\begin{enumerate}
\item $V$ cannot integrate into a continuous representation of $G_{p,q}$ on a Hilbert space.
\item $V$ integrates into a continuous representation of $G_n$ on a Hilbert space if and only if $V$ or its contragredient is isomorphic to $M(-1,\ldots ,-1)$ or $M(-1,\ldots ,-1,-2)$. In this case, the corresponding representation of $G_n$ is simple and unitary, and isomorphic to the even or odd part of the metaplectic representation or its contragredient.
\end{enumerate}
\end{theorem}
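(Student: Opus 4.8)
\emph{Proof proposal.} The plan is to handle the two assertions separately, in each case imitating the type $A$ argument: restrict an integrating representation to a maximal compact subgroup, extract a $\germ k $-finiteness condition, use Lemma \ref{Fe-BBL} and Theorem \ref{thm-BBL} to cut the list of candidate modules down to size, and finally compare with a known construction.

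\emph{Proof of (1).} Suppose $V$ integrates to a continuous representation of $G_{p,q}$ on a Hilbert space $\cal H $. As in the proof of Theorem \ref{theo:typeA}, $\cal H _{|K_{p,q}}$ is unitarizable, hence a Hilbert direct sum of finite dimensional representations of $K_{p,q}\cong Sp(p)\times Sp(q)$; thus $V$ is locally finite as a module over $\germ k _{p,q}\otimes\C\cong\germ sp (p,\C)\oplus\germ sp (q,\C)$. Choosing the Cartan subalgebra compatibly, the root system of this subalgebra is the ``compact'' subsystem $C_p\sqcup C_q$ of $C_n$; in particular it contains all $n$ long roots $\pm 2e_i$. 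By Fernando's theory (\cite{Fe90}, Lemma \ref{Fe-BBL}) the set of roots $\alpha$ for which $E_{\alpha}$ and $E_{-\alpha}$ both act locally finitely on $V$ is the root system of a Levi subalgebra of $\germ g $; but a Levi subalgebra of $\germ sp (n,\C)$ has exactly one factor of type $C$, so its root system contains strictly fewer than $n$ long roots unless it is all of $C_n$. Hence every root vector of $\germ g $ acts locally finitely on $V$, so the simple module $V$ is finite dimensional --- contradiction. (Alternatively, by Theorem \ref{thm-BBL} one may write $V\cong M(b)$ and check from the explicit action on $M(b)$ that the long root vectors $E_{\pm 2e_i}$ act injectively whenever $M(b)$ is infinite dimensional, while $E_{2e_1}\in\germ k _{p,q}\otimes\C$ must act locally finitely.)

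\emph{Proof of (2).} The even and odd parts of the metaplectic representation are irreducible unitary representations of the metaplectic two-fold cover of $Sp(n,\R)$; since $G_n$, the universal cover of $Sp(n,\R)$, covers that double cover, pulling back gives irreducible unitary --- hence continuous --- representations of $G_n$. Comparing the infinitesimal action on the monomial basis with the action of $\germ sp (n,\C)$ on $M(b)$ from \cite{BBL97} (as in Remark \ref{rema-fdd1}) identifies their underlying $\germ g $-modules, up to contragredient, with $M(-1,\ldots,-1)$ and $M(-1,\ldots,-1,-2)$; taking contragredients gives two further continuous unitary representations. Thus all four modules in the statement integrate, and the integrated representations --- being those irreducible ones --- are simple and unitary and isomorphic to the even or odd metaplectic representation or its contragredient. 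Conversely, suppose $V$ integrates to a continuous representation of $G_n$. Restricting to $K_n\cong SU(n)$ shows as before that $V$ is locally finite over $\germ k _n\otimes\C\cong\germ sl (n,\C)$; writing $\germ l \cong\germ gl (n,\C)$ for the maximal Levi subalgebra of the Siegel parabolic of $\germ g $ (so $\germ k _n\otimes\C$ is its semisimple part) and noting that the centre of $\germ l $ lies in $\germ h $ and hence is diagonalizable on $V$, we conclude that $V$ is a $(\germ g ,\germ l )$-module of finite type. The type $C$ analogue of Corollary \ref{cor-gKd1}, whose proof carries over verbatim (Lemma \ref{Fe-BBL} applied to the long simple root vectors and the adjacent short pair, using that the sum of the long simple root and the adjacent short simple root lies in $\cal R $), shows that $V$ or its contragredient is a highest weight module. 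One then invokes the type $C$ analogue of Proposition \ref{prop-HWd1} from \cite{BBL97} (the list of degree $1$ highest weight $\germ sp (n,\C)$-modules) and, exactly as in the proof of Proposition \ref{prop-glmod}, determines which are $\germ l $-finite: their $\germ l $-isotypic components are finite dimensional $\germ sl (n)$-modules with all weight multiplicities $1$, hence each equals some $V(\omega_k)$; since the nilradical of the Siegel parabolic is $\cong S^2\C^n$ as an $\germ l $-module, iterating its action and requiring every component to remain multiplicity free (Pieri's rule) forces the distinguished component generating $V$ over that nilradical to be the trivial or the standard $\germ sl (n)$-module, i.e.\ $V$ (or its contragredient) is the even or odd metaplectic module. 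Combined with the previous paragraph, this proves (2).

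\emph{Main obstacle.} The delicate step is this last bookkeeping, i.e.\ the type $C$ analogue of Proposition \ref{prop-glmod}. Unlike type $A$, where $N(a\omega_1)$ carries a free parameter $a\in\C $ and is still $\germ l _0$-finite, here one must show that \emph{no} one-parameter family of degree $1$ highest weight $\germ sp (n,\C)$-modules is $\germ l $-finite, so that only the discrete metaplectic values $b=(-1,\ldots,-1)$ and $b=(-1,\ldots,-1,-2)$ (and their contragredients) survive; this rests on the classification in \cite{BBL97} together with the observation that the only finite dimensional multiplicity-free $\germ sl (n,\C)$-modules obtainable from a single $\germ l $-type by iterating the $S^2\C^n$-action without destroying multiplicity freeness are the symmetric powers $S^m\C^n$. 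By comparison, the input needed for (1) --- that a Levi subalgebra of $\germ sp (n,\C)$ cannot contain all $n$ long roots unless it is everything --- is elementary.
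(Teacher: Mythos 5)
Your proposal reaches the same conclusions and is essentially sound, but part (1) takes a genuinely different and in fact shorter route than the paper. The paper argues: $(\germ g ,\germ l _p)$-finiteness forces $V$ to be a highest weight module, the type $C$ analogue of Proposition \ref{prop-HWd1} (Proposition 3.6 of \cite{BBL97}) leaves only the two metaplectic halves, and one then checks these are not $(\germ g ,\germ l _p)$-modules of finite type. You instead observe that the root system of $\germ sp (p,\C)\oplus\germ sp (q,\C)$ already contains all $n$ long roots $\pm 2\epsilon_i$, so all long root vectors act locally finitely, and no proper Levi subsystem of $C_n$ contains all the long roots; hence $V$ would be finite dimensional. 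This bypasses the highest weight classification entirely. (If one prefers not to invoke the full Fernando statement that the symmetric part of the locally finite roots is a Levi subsystem, the same conclusion follows from Lemma \ref{Fe-BBL}(2) plus closedness of the set of locally finite roots: no short root can have both directions injective since it adds to a long root, and once one direction $\epsilon_i-\epsilon_j$ is locally finite, adding $\pm 2\epsilon_i,\pm 2\epsilon_j$ generates all four of $\pm\epsilon_i\pm\epsilon_j$.) For part (2) you follow the paper's route (restriction to $K_n$, the type $C$ analogue of Corollary \ref{cor-gKd1}, then the classification of degree $1$ highest weight $\germ sp (n,\C)$-modules), and you add an explicit integrability argument by pulling back the metaplectic representation, which the paper leaves implicit.

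Two caveats on part (2). First, your ``Pieri rule'' detour contains a false intermediate claim: a finite dimensional multiplicity-free $\germ sl (n,\C)$-module need not be a fundamental representation $V(\omega_k)$ --- the symmetric powers $S^m\C^n=V(m\omega_1)$ are also multiplicity free, and these are precisely the $\germ l $-types that actually occur in the metaplectic halves. Second, the detour is unnecessary: in type $C$, unlike type $A$, Proposition 3.6 of \cite{BBL97} already says there are exactly \emph{two} simple infinite dimensional highest weight modules of degree $1$ (highest weights $-\frac12\omega_n$ and $\omega_{n-1}-\frac32\omega_n$), with no continuous parameter, so no further $\germ l $-finiteness selection is needed once highest-weight-ness is established. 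Your ``main obstacle'' paragraph suggests a one-parameter family must be excluded by hand; it is the classification itself that does this work. Neither point invalidates the proof, since the correct citation is also present.
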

\begin{proof} The proof is analogous to the proof of theorem \ref{theo:typeA}. In the case of $G_{p,q}$, the complexified Lie algebra of $K_{p,q}$ is the Lie algebra $\germ l _p$ whose roots with respect to the standard Cartan subalgebra of $\germ g $ are (see \cite[Appendix $C$]{Kna02})
\begin{align*}
 & \{\pm 2\epsilon_l,\ \pm(\epsilon_j\pm\epsilon_k)\ : \ 1\leq l\leq p,\ 1\leq j\not=k\leq p\}\\
\sqcup & \{\pm 2\epsilon_l,\ \pm(\epsilon_j\pm\epsilon_k)\ : \ p+1\leq l\leq p+q,\ p+1\leq j\not=k\leq p+q\}.
 \end{align*}
The module $V$ should be a $(\germ g ,\germ l _p)$-module of finite type. Using an analogue of proposition \ref{prop-glmod}, we see that $V$ or its contragredient should be a highest weight module. The simple infinite dimensional highest weight module of degree 1 have been classified by Benkart, Britten, Lemire \cite[Proposition 3.6]{BBL97}. They are only two:  their highest weights are $-\frac{1}{2}\omega_{n}$ and $\omega_{n-1}-\frac{3}{2}\omega_{n}$ respectively. It is then easy to check that the possible modules are not $(\germ g ,\germ l _p)$-module of finite type.

The case of $G_n$ is analogous. The complexified Lie algebra of $K_n$ is the Lie algebra $\germ l $ whose roots are $$\{\pm(\epsilon_j\pm\epsilon_k)\ : \ 1\leq j\not=k\leq n-1\}.$$ Once again, $V$ or its contragredient should be isomorphic to a highest weight module. It is then well-known that these two highest weight modules correspond to the even and odd part of the metaplectic representation (see e.g. \cite{Li00}).
\end{proof}

\vspace{0.5cm}
\noindent{\bf Acknowledgment}: We are grateful to Professor Bent {\O}rsted for some very interesting and stimulating discussions of the subjects. The author also gratefully acknowledges support from Aarhus University, and the Max Planck Institut for Mathematics in Bonn.

% BibTeX users please use one of
%\bibliographystyle{spbasic}      % basic style, author-year citations
%\bibliographystyle{spmpsci}      % mathematics and physical sciences
%\bibliographystyle{spphys}       % APS-like style for physics
%\bibliography{}   % name your BibTeX data base

% Non-BibTeX users please use

\end{document}